\numberwithin{equation}{section}
\theoremstyle{definition}
\newtheorem{Thm}{Theorem}
\newtheorem{Prop}[equation]{Proposition}
\newtheorem{Lem}[equation]{Lemma}
\newtheorem{Rmk}[equation]{Remark}
\newtheorem{Conj}[equation]{Conjecture}
\def\imod#1{\allowbreak\mkern5mu{\operator@font mod}\,\,#1}
\definecolor{blue}{rgb}{0,0,1}
\definecolor{red}{rgb}{1,0,0}
\definecolor{green}{rgb}{0,.6,.2}
\definecolor{purple}{rgb}{1,0,1}
\long\def\red#1\endred{{\color{red}#1}}
\long\def\blue#1\endblue{{\color{blue}#1}}
\long\def\purple#1\endpurple{{\color{purple}#1}}
\long\def\green#1\endgreen{{\color{green}#1}}
\begin{document}

\title[]{Rankin-Cohen Brackets of Hilbert Hecke Eigenforms}
\author{Yichao Zhang and Yang Zhou$^{\star}$}
\address{School of Mathematics,
Harbin Institute of Technology,
150001 Harbin, P.R.China}
\email{yichao.zhang@hit.edu.cn}
\address{School of Mathematics and Statistics, Hunan Normal University, Changsha 410081, P. R. China}
\email{yichao.zhang@hunnu.edu.cn}
\address{School of Mathematics, Harbin Institute of Technology, Harbin 150001, P. R. China}
\email{18B912038@stu.hit.edu.cn}
\date{}
\subjclass[2020]{Primary: 11F41, 11F60, 11F67.}
\thanks{$^{\star}$ corresponding author}

\keywords{Hilbert eigenform, Rankin-Cohen bracket, Rankin-Selberg method, Freitag's conjecture.}

\begin{abstract}
Over any fixed totally real number field with narrow class number one,  we prove that the Rankin-Cohen bracket of two Hecke eigenforms  for the Hilbert modular group can only be a Hecke eigenform for dimension reasons, except for a couple of cases where the Rankin-Selberg method does not apply. We shall also prove a conjecture of Freitag on the volume of Hilbert modular groups, and assuming a conjecture of Freitag on the dimension of the cuspform space, we obtain a finiteness result on eigenform product identities.
\end{abstract}
\maketitle
%\tableofcontents
\newcommand{\Z}{{\mathbb Z}} % for integers
\newcommand{\Q}{{\mathbb Q}} % for rational
\newcommand{\R}{{\mathbb R}}
\newcommand{\C}{{\mathbb C}}
\newcommand{\N}{{\mathbb N}}
\newcommand{\h}{{\mathbb H}}
\newcommand{\SR}{{\mathrm{SL}_2(\R)}}

\section{Introduction and Statement of the Main Theorem}

Let $F$ be any fixed totally real number field of degree $n$ with narrow class number one, and $\mathfrak{d}$, $\mathcal{O}$ and $\mathcal{O}^{\times+}$ be its different, the ring of integers and  the group of totally positive units respectively. Consider the Hilbert modular group of full level
$$
\Gamma=\Gamma_{0}(\mathcal{O},\mathcal{O})=\left\{\gamma=\left(\begin{array}{ll}
a & b \\
c & d
\end{array}\right) \in\left(\begin{array}{cc}
\mathcal{O} & \mathfrak{d}^{-1} \\
 \mathfrak{d} & \mathcal{O}
\end{array}\right): \mathrm{det}(\gamma) \in \mathcal{O}^{\times+}\right\},
$$ and denote by $M_{k}\left(\Gamma\right)$ and $S_{k}\left(\Gamma\right)$ the space of Hilbert modular forms  and that of cuspforms of weight $k=(k_1,\cdots,k_n)\in \mathbb{N}^n$ for $\Gamma$ respectively.  
Here $\mathbb{N}$ is the set of natural numbers and $0\in\mathbb N$. We introduce $\mathfrak d$ in the definition of $\Gamma$  to make the Fourier expansion of Hilbert modular forms over $\mathcal{O}$.

For $m\in\mathbb{N}$, the $m$-th Rankin-Cohen bracket of $f_{1} \in M_{k}\left(\Gamma\right)$ and $f_{2} \in M_{l}\left(\Gamma\right)$ is defined by
\begin{equation*}
 \left[f_{1}, f_{2}\right]_{m}=\sum_{\substack{t \in \mathbb{N}^{n} \\
0\leq t_{i}\leq m}}(-1)^{|t|}\binom{k+m\mathbf{1}-\mathbf{1}}{m\mathbf{1}-t}\binom{
l+m\mathbf{1}-\mathbf{1}}{t}
 f_{1}^{(t)}(z) f_{2}^{(m\mathbf{1}-t)}(z),   
\end{equation*}
where $|t|=\sum_{i=1}^{n}t_i$, $\mathbf{1}=(1,\cdots,1)\in\mathbb{N}^n$, $z=(z_1,\cdots, z_n)$, $f_{i}^{(m\mathbf{1})}(z)=\frac{\partial^{nm}}{\partial z_{1}^{m} \cdots \partial z_{n}^{m}} f_{i}(z)$ for $i=1,2$ and $$\binom{k+m\mathbf{1}-\mathbf{1}}{m\mathbf{1}-t}=\prod_{j=1}^{n}\binom{k_j+m-1}{m-t_j}.$$
Clearly if $m=0$, $[f_1,f_2]_0=f_1f_2$.

When is the Rankin-Cohen bracket of two Hecke eigenforms still a Hecke eigenform? In the elliptic case and $m=0$, Duke \cite{Duke1997product} and Ghate  \cite{Ghate2000Eisenstein} proved independently that there are exactly $16$ eigenform product identities $f=gh$ for $\mathrm{SL}_2(\mathbb{Z})$, all of which hold trivially for dimension reasons. For example, since $\dim M_8(\mathrm{SL}_2(\mathbb{Z}))=\dim S_{16}(\mathrm{SL}_2(\mathbb{Z}))=1$, $E_4^2$ is a multiple of $E_8$ and $E_4\Delta_{12}$ is a multiple of $\Delta_{16}$. Here $E_k$ is the Eisenstein series of weight $k$ and $\Delta_{k}$ is the normalized cuspidal Hecke eigenform of weight $k$. Later, Johnson \cite{Johnson2013eigenforms} showed that there are $61$ eigenform identities over all levels, all weights and Nebentypes, some of which hold non-trivially. When $m>0$, Lanphier and Takloo-Bighash \cite{Lanphier2004RC} proved that there are finitely many $f$, $g$, and $m$ such that $[f,g]_m$ is an eigenform for $\mathrm{SL}_2(\mathbb{Z})$, and then Meher \cite{Meher2012RC} proved that there are no non-trivial a.e. eigenform bracket identities $f=[g,h]_m$ for general square-free level except for finitely many cases, a generalization of the result of Ghate \cite{Ghate2002eigenforms} to the case of Rankin-Cohen brackets. 

For Hilbert modular forms, when $n=2$ and $m=0$, Joshi and the first named author \cite{Joshi-Zhang2019Hilbert} proved that amongst all real quadratic fields $F$, the equation $f=gh$ has only finitely many solutions in Hecke eigenforms of full level and weight $2\cdot\mathbf{1}$ or greater. They also conjectured that amongst all totally real number fields $F$ of degree $n$  and all non-zero integral ideals $\mathfrak{n}$, there exist only finitely many solutions to the
equation
$f=gh$, where $f$, $g$, $h$ are Hecke eigenforms of level $\mathfrak{n}$ and integral weights $2\cdot\mathbf{1}$ or greater. For general $n>1$ and $m=0$, You and the first named author \cite{You-Zhang2021Hilbert} proved that for any fixed positive integer $n$, there are finitely many Hilbert eigenform product identities over all totally real number fields of degree $n$ and all eigenforms of full level and weight $2\cdot\mathbf{1}$ or greater, confirming a special case of Conjecture 8.1 in \cite{Joshi-Zhang2019Hilbert}. 

In this paper, we consider the case $n>1$ and prove that the eigenform bracket identities hold trivially with a couple of exceptions. Even in the case $m=0$, our result is different from that of \cite{You-Zhang2021Hilbert} where the authors were only concerned with finiteness. 
We shall look at the case $m=0$ in the last section and consider the above conjecture of Joshi and the first named author, under our assumption of trivial narrow class group and a conjecture of Freitag (slightly more general than Freitag's original conjecture on page 131 of \cite{Freitag1990Hilbert}). Along the way, we shall give a proof of the volume part of Freitag's conjecture for all Hilbert modular groups, which is of independent interest.

Note that the number of Eisenstein series is one since the narrow class group is trivial and hence the number of cusps is one. Note also that the Rankin-Cohen bracket
is a cusp form if $m > 0$. Here is the main theorem.

\vspace{0.3cm}
\noindent
\textbf{Main Theorem.}
 Let $F$ be a totally real number field of degree $n$ with narrow class number one. Assume that $m\in\mathbb{N}$ and $k\geq 2$ is even. 
\begin{enumerate}
     \item Let $k\geq4$ and $l\in\mathbb{N}^n$. Suppose that $g=E_k$ is an Eisenstein series and $h \in S_{l}(\Gamma)$ is a normalized  eigenform. If $\mathrm{dim}\ S_{k\mathbf{1}+l+2m\mathbf{1}}(\Gamma)> 1$, then $[g,h]_{m}$ is not an eigenform.
     \item Let $l\geq 2$ be an even integer, not equal to $k$, and $g=E_k$, $h=E_l$ be Eisenstein series. Assume that \begin{align*}
  \mathrm{dim}\ S_{k\mathbf{1}+l\mathbf{1}+2m\mathbf{1}}(\Gamma) \geq \begin{cases}1 & \text { if } m=0 \\ 2 & \text { if } m>0\end{cases}.
\end{align*}
Then $[g,h]_{m} \in M_{k\mathbf{1}+l\mathbf{1}+2m\mathbf{1}}(\Gamma)$ is not an eigenform.
 \end{enumerate}
\vspace{0.3cm}

In other words, the Rankin-Cohen bracket is an eigenform only for dimension reason in the above cases. It is immediate that the Rankin-Cohen bracket of two cuspidal eigenforms cannot be an eigenform since its $\mathcal{O}$-th coefficient would vanish. Therefore the main theorem is separated into two parts: one of $g,h$ is cuspidal or both of them are Eisenstein series.  The basic idea of the proof is that an Euler product, in this case a Rankin-Selberg $L$-function of two eigenforms,
does not vanish at any point in the region of absolute convergence. Note that the case that $g=h=E_k$ is not included in part (2) of the main theorem, due to the fact that the point to be evaluated at does not belong to the region of absolute convergence of the Rankin-Selberg $L$-function. All these ideas go back to the original proofs of Duke \cite{Duke1997product} and Ghate \cite{Ghate2002eigenforms}. These authors also treated the case of the product of two elliptic Eisenstein series of the same weight. For instance, Ghate did this by studying the growth properties of Bernoulli numbers. In this paper, we also remark on the case $g=h=E_k$, following the lines of \cite{You-Zhang2021Hilbert} and \cite{Lanphier2004RC}.

Here is the layout of this paper. In Section \ref{section2}, we set up the notations and introduce the basics. In Section \ref{section3}, we divide the main theorem into Proposition \ref{[C,E]} and Proposition \ref{[E,E]}, and give the proofs. Finally, in Section \ref{section4}, we recall a conjecture of Freitag and give a proof of its volume part for Hilbert modular groups (Theorem \ref{Conjecture-firstformula}). Then assuming the dimension part of the conjecture, we prove a finiteness result on eigenform product identities (Theorem \ref{eigenformidentity}).

\noindent {\bf Acknowledgement}: The first author is partially supported by a grant of National Natural Science Foundation of China (no. 12271123). The authors are very grateful to the anonymous referee for the useful comments on a previous version of our manuscript.

\section{Preliminaries}\label{section2}

Let $F$ be a totally real number field of degree $n$ with narrow class number one. Denote by $\mathcal{O}$ the ring of integers of $F$, $\mathcal{O}^{\times}$ the group of units, $D$ the discriminant of $F$, and $\mathfrak{d}$ the different of $F$. Let $\{\tau_1,\cdots,\tau_n\}$ be the set of $n$ real embeddings of $F$, and for simplicity we set the first real embedding $\tau_1$ to be the inclusion $F\subset \mathbb R$. Write $a_{i}=\tau_{i}(a)$ for any $a\in F$, and call an element $a$ in $F$ totally positive, denoted by $a \gg 0$, if $a_i>0$ for each real embedding $\tau_i$. We denote by $\mathcal{O}^{\times+}$ the group of totally positive units of $F$. 

Let $\mathbb{N}$ be the set of natural numbers and we assume $0\in\mathbb N$. We denote $\mathbf{1}=(1,\cdots,1)\in\mathbb{N}^n$. For $k=\left(k_{1},\cdots,k_{n}\right) \in \N^{n}$ and $z=\left(z_{1},\cdots,z_{n}\right) \in (\C^{\times})^{n}$, where  $\C^{\times}$ is the group of invertible elements of $\C$ , we employ the multi-index notation:
$$|k|=\sum _{i=1}^{n}k_i,\quad k!=\prod _{i=1}^{n}k_i!,\quad z^{k}=\prod_{i=1}^{n} z_{i}^{k_{i}}.$$
Define the trace and norm of $z\in \mathbb{C}^n$ by $$\mathrm{Tr}(z)=\sum_{i=1}^{n}z_i ,\quad N(z)=\prod_{i=1}^{n} z_{i}.$$
 For $k, l\in \mathbb{N}^n$, denote by $k\succeq l$ if $k_i\geq l_i$ for any $1\leq i \leq n$ and put $k_0=\mathrm{max}\{k_1,\cdots,k_n\}$.
 
In this paper, we only consider the full-level Hilbert modular group $\Gamma_{0}(\mathcal{O},\mathcal{O})$ defined by
$$
\Gamma_{0}(\mathcal{O},\mathcal{O})=\left\{\gamma=\left(\begin{array}{ll}
a & b \\
c & d
\end{array}\right) \in\left(\begin{array}{cc}
\mathcal{O} & \mathfrak{d}^{-1} \\
 \mathfrak{d} & \mathcal{O}
\end{array}\right): \mathrm{det}(\gamma) \in \mathcal{O}^{\times+}\right\},
$$ 
which can be embedded into $\mathrm{GL}^+_{2}(\mathbb{R})^n$ by $$\left(\begin{matrix}
a & b \\
c & d
\end{matrix}\right) \mapsto\left(\left(\begin{matrix}
a_1 & b_1 \\
c_1 & d_1
\end{matrix}\right), \cdots,\left(\begin{matrix}
a_n & b_n \\
c_n & d_n
\end{matrix}\right)\right).$$
Denote $\Gamma=\Gamma_0(\mathcal{O},\mathcal{O})$ from now on. Let $\mathbb{H}$ be the complex upper half plane. A Hilbert modular form of weight $k\in\mathbb{N}^n$ for $\Gamma$ is a holomorphic function $f$ on $\mathbb{H}^{n}$ such that 
$$
\left(f|_{k}\gamma\right)(z)=\mathrm{det}(\gamma)^{k/2}j(\gamma,z)^{-k} f(\gamma z)=f(z), \text{ for any } \gamma\in \Gamma,
$$
where $\mathrm{det}(\gamma)=(\mathrm{det}(\gamma_1),\cdots,\mathrm{det}(\gamma_n))$ and  $$\gamma z=\left(\frac{a_{1} z_{1}+b_{1}}{c_{1} z_{1}+d_{1}}, \cdots, \frac{a_{n} z_{n}+b_{n}}{c_{n} z_{n}+d_{n}}\right),\quad j(\gamma,z)=\left(c_{1} z_{1}+d_{1},\cdots,c_{n} z_{n}+d_{n}\right).$$
We denote by $M_{k}\left(\Gamma\right)$ the space of Hilbert modular forms of weight $k$ for $\Gamma$. Every $f \in M_{k}\left(\Gamma\right)$ has a unique Fourier expansion at the cusp of the form
$$f=\sum_{\nu\in \mathcal{O}}a(\nu)\mathrm{exp}(2\pi i \mathrm{Tr}(\nu z)),$$
where $\mathrm{Tr}(\nu z)=\sum_{i=1}^{n} \nu_{i} z_{i}$. Here $a(\varepsilon\nu)=\varepsilon^{\frac{k}{2}}a(\nu)$ for $\left(\begin{smallmatrix}
    \varepsilon&0\\
    0&1
\end{smallmatrix}\right)\in\Gamma$. We call $f$ a cuspform if 
$a(0)=0$ and denote by $S_{k}\left(\Gamma\right)$ the space of cuspforms. Define the Petersson inner product to be $$\langle f,h\rangle=\int_{\Gamma\backslash\mathbb{H}^n}f(z)\overline{h(z)}y^kdv,\quad dv=\prod_{j=1}^{n}\frac{dx_jdy_j}{y_j^2},\quad z_j=x_j+iy_j.$$
With this, denote by $\mathcal{E}_k(\Gamma)$ the Eisenstein subspace, namely the orthogonal complement of $S_k(\Gamma)$ in $M_k(\Gamma)$ with respect to the Petersson inner product. Denote by $\mathrm{vol}(\Gamma\backslash\mathbb{H}^n)$ the volume of $\Gamma\backslash\mathbb{H}^n$. 

Let $f_1 \in M_{k}(\Gamma)$ and $f_2 \in M_{l}(\Gamma)$ for $k$, $l\in \mathbb{N}^{n}$.
For $m\in \mathbb{N}$, the $m$-th Rankin-Cohen bracket is defined by
\begin{equation}\label{RCdef}
 \left[f_{1}, f_{2}\right]_{m}=\sum_{\substack{t \in \mathbb{N}^{n} \\
0\cdot\mathbf{1}\preceq t\preceq m\mathbf{1}}}(-1)^{|t|}\binom{k+m\mathbf{1}-\mathbf{1}}{m\mathbf{1}-t}\binom{l+m\mathbf{1}-\mathbf{1}}{t}
f_{1}^{(t)}(z) f_{2}^{(m\mathbf{1}-t)}(z),   
\end{equation}
where $f_{i}^{(m\mathbf{1})}(z)=\frac{\partial^{nm}}{\partial z_{1}^{m}\cdots \partial z_{n}^{m}} f_{i}(z)$ for $i=1,2$ and $$\binom{k+m\mathbf{1}-\mathbf{1}}{m\mathbf{1}-t}=\prod_{j=1}^{n}\binom{k_j+m-1}{m-t_j}.$$
This is a generalization of product of modular forms. By direct computation, we have  $[f_1,f_2]_m$ is equal to $[f_2,f_1]_m$ if $nm$ is even 
 and $-[f_2,f_1]_m$ otherwise. Remark that $\left[f_{1}, f_{2}\right]_{m} \in M_{k+l+2 m\mathbf{1}}\left(\Gamma\right)$,
and if $m \neq 0$, then $\left[f_{1}, f_{2}\right]_{m} \in S_{k+l+2 m\mathbf{1}}\left(\Gamma\right)$.
See \cite{Choie2007Hilbert} for more properties of the Rankin-Cohen bracket.

For $\nu\in\mathcal{O}$, the Hilbert $\nu$-th Poincar\'e series for $\Gamma$ is defined by
$$P_{k,\nu}(z)=\sum_{\gamma\in Z(\Gamma)N(\Gamma)\backslash\Gamma}(1|_k\gamma)(z)\mathrm{exp}(2\pi i \mathrm{Tr}(\nu\gamma z)),$$
where 
$$
Z(\Gamma)=\left\{\left(\begin{array}{ll}
\varepsilon & 0 \\
0 & \varepsilon
\end{array}\right): \varepsilon \in \mathcal{O}^{\times}\right\}, \quad N(\Gamma)=\left\{\left(\begin{array}{ll}
1 & b \\
0 & 1
\end{array}\right): b \in\mathfrak{d}^{-1}\right\}.
$$
Remark that $P_{k,\nu}\in S_k(\Gamma)$ if $k>2\cdot\mathbf{1}$ and $\nu\gg0$. For $$f(z)=\sum_{\substack{\nu\in\mathcal{O}\\ \nu\gg 0}}a(\nu)\mathrm{exp}(2\pi i \mathrm{Tr}(\nu z))\in S_k(\Gamma),$$
we have 
\begin{equation}\label{Poincare-coeff}
\langle f, P_{k,\nu}\rangle=a(\nu)D^{-\frac{1}{2}}(4\pi)^{n-|k|} \nu^{\mathbf{1}-k} (k-2\cdot\mathbf{1})!. 
\end{equation}
which is exactly  equation (28) in \cite{Choie2007Hilbert}.

For even $k>2$, the Eisenstein series is defined as
$$
    E_{k}(z)=\sum_{\gamma \in \Gamma_{\infty} \backslash \Gamma}\left(1|_{k\mathbf{1}} \gamma\right)(z)\in \mathcal{E}_{k\mathbf{1}}(\Gamma), 
$$
where $\Gamma_{\infty}=\left\{\left(\begin{smallmatrix}
    *&*\\
    0&*
\end{smallmatrix}\right)\in \Gamma\right\}$. For $k=2$, define 
%$$E_2(z)=\lim_{\sigma\rightarrow 0^{+}}\sum_{\gamma\in\Gamma_{\infty}\backslash\Gamma}\left(1|_{2\cdot\mathbf{1}} \gamma\right)(z)|\left(1|_{-\mathbf{1}} \gamma\right)(z)|^{\sigma}$$
$$E_2(z)=\zeta^{-1}_F(2)\lim_{\sigma\rightarrow 0^+}\sum_{\substack{(\alpha, \beta)\in\mathcal{O}\times\mathcal{O}\\(\alpha, \beta)\neq(0,0)}}N(\alpha z+\beta)^{-2}|N(\alpha z+\beta)|^{-\sigma},$$
where $\zeta_F(k)$ is the Dedekind zeta function of $F$. There is
a great difference between the cases $n = 1$ and $n\geq 2$. When $n\geq2$,  $E_2(z)$ is a holomorphic Hilbert modular form. But when $n=1$, $E_2(z)$ is not a holomorphic function. 
Since $F$ has narrow class number one, the dimension of $\mathcal{E}_{k\mathbf{1}}(\Gamma)$ is equal to $1$, so $E_k$ generates $\mathcal{E}_{k\mathbf{1}}(\Gamma)$.   

For any non-zero integral ideal $\mathfrak{n}=(\nu)$ with $\nu\in\mathcal{O}$, the $\mathfrak n$-th Fourier coefficient of $f$ is defined to be $c(\mathfrak{n}, f)=a(\nu)\prod_{i=1}^{n}\nu_i^{\frac{k_0-k_i}{2}}$,
where $a(\nu)$ is the $\nu$-th Fourier coefficient of $f$. For the definition of $c(\mathfrak{n}, f)$, we refer to (2.24) of \cite{Shimura1978Hilbert}. Now for each $\mathfrak n$, there is an operator $T_\mathfrak{n}$, the $\mathfrak n$-th Hecke operator, on the space of Hilbert modular forms, and $T_\mathfrak{n}$ preserves the cuspidal subspace and the Eisenstein subspace. A Hecke eigenform is a non-zero common eigenfunction for all Hecke operators $T_{\mathfrak{n}}$. The fact that the Hecke operators $T_\mathfrak{n}$ on $S_k(\Gamma)$ are self-adjoint and mutually commute implies the existence of a basis of eigenforms. The Eisenstein series $E_k$ is also an eigenform. Any Hecke eigenform $f$ must have $c(\mathcal{O},f)\neq 0$, and $f$ is called normalized if $c(\mathcal{O},f)=1$ (see page 650 of \cite{Shimura1978Hilbert}).
Note that if $f$ is a normalized eigenform, then its $T_\mathfrak{n}$-eigenvalue is nothing but $c(\mathfrak{n},f)$, and the self-adjointness of $T_\mathfrak{n}$ forces $c(\mathfrak n,f)$ to be real.

Define the $L$-function for $f\in M_k(\Gamma)$ as
$$L(s,f)=\sum_{\mathfrak{n}}c(\mathfrak{n},f)N(\mathfrak{n})^{-s},\quad s\in\mathbb{C},$$
where $\mathfrak{n}$ runs over all non-zero integral ideals of $\mathcal{O}$. This function is absolutely convergent for $\mathrm{Re}(s)>1$. In particular, when  $c(\mathfrak{n},f)=1$ for all $\mathfrak{n}$, we have the Dedekind zeta function of $F$   $$\zeta_F(s)=\sum_{\mathfrak{n}}N(\mathfrak{n})^{-s}.$$ The Rankin-Selberg $L$-function of $f\in M_k(\Gamma)$ and $h\in M_l(\Gamma)$ is defined to be
$$
L(s,f,h)=\sum_{\mathfrak{n}}c(\mathfrak{n},f)c(\mathfrak{n},h) N(\mathfrak{n})^{-s}.
$$
If $f\in S_k(\Gamma)$ is a normalized Hecke eigenform, then by the Ramanujan conjecture proved in Theorem 1 of \cite{Blasius2006Ramanujan}, for any prime ideal $\mathfrak{p}$,
$$|c(\mathfrak p, f)|\leq
2N(\mathfrak p)^{\frac{k_0-1}{2}}.$$
Therefore, if $f$ is a cuspidal eigenform and $h$ is an eigenform, then $L(s,f,h)$ is absolutely convergent for $\mathrm{Re}(s)>\frac{k_0+l_0}{2}$ or $\frac{k_0-1}{2}+l_0$ depending on whether $h$ is cuspidal or not, where $l_0=\mathrm{max}\{l_1,\cdots,l_n\}$.

\section{Proof of the Main Theorem}\label{section3}

We first consider the Rankin-Cohen bracket of a cuspidal eigenform with an Eisenstein series. Since the Eisenstein subspace is non-trivial only if the weight is even and parallel, we assume $k\geq2$ is even in this case. Recall that the Eisenstein subspace $\mathcal{E}_{k\mathbf{1}}(\Gamma)$ is one-dimensional and generated by $E_k$, so we can fix the Eisenstein series as $E_k$. 

Following the lines of \cite{Choie2007Hilbert}, we prove a couple of lemmas in our setting. First comes an explicit expression of the Rankin-Cohen bracket of $E_k$ with a Hilbert modular form. 

\begin{Lem}\label{RC-related-Poincare}
Set $l\in \mathbb{N}^n$, $m\in\mathbb{N}$ and $k\geq2$.
Let $h(z)=\sum_{\nu\in\mathcal{O}}b(\nu)\mathrm{exp}(2\pi i \mathrm{Tr}(\nu z))\in M_l(\Gamma)$. We have
$$[E_k,h]_m=\binom{k\mathbf{1}+m\mathbf{1}-\mathbf{1}}{m\mathbf{1}}\sum_{\nu\in\mathcal{O}/\mathcal{O}^{\times+}}(2\pi i \nu)^{m\mathbf{1}} b(\nu)P_{k\mathbf{1}+l+2m\mathbf{1},\nu}(z).$$
\end{Lem}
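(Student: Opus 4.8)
The plan is to unfold the Eisenstein series inside the bracket and then reorganize the resulting sum into Poincar\'e series. Write $E_k=\sum_{\gamma\in\Gamma_\infty\backslash\Gamma}1|_{k\mathbf{1}}\gamma$ and use the bilinearity of the bracket together with the modular invariance $h=h|_l\gamma$ to get
\[
[E_k,h]_m=\sum_{\gamma\in\Gamma_\infty\backslash\Gamma}\bigl[1|_{k\mathbf{1}}\gamma,\;h|_l\gamma\bigr]_m .
\]
The key tool here is the equivariance of the Rankin-Cohen bracket under the slash action, namely $[f_1|_{k\mathbf{1}}\gamma,f_2|_l\gamma]_m=[f_1,f_2]_m|_{k\mathbf{1}+l+2m\mathbf{1}}\gamma$ (as developed in \cite{Choie2007Hilbert}), which turns each summand into $[1,h]_m|_K\gamma$ with $K=k\mathbf{1}+l+2m\mathbf{1}$. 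Since $f_1\equiv1$ is constant, only the $t=0$ term of \eqref{RCdef} survives, so that $[1,h]_m=\binom{k\mathbf{1}+m\mathbf{1}-\mathbf{1}}{m\mathbf{1}}h^{(m\mathbf{1})}$, and differentiating the Fourier expansion termwise gives $h^{(m\mathbf{1})}=\sum_{\nu\in\mathcal{O}}(2\pi i\nu)^{m\mathbf{1}}b(\nu)\,\mathrm{exp}(2\pi i\,\mathrm{Tr}(\nu z))$.

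Next I would apply $|_K\gamma$ termwise and interchange the $\nu$- and $\gamma$-summations (justified by absolute convergence, to which I return below), producing
\[
[E_k,h]_m=\binom{k\mathbf{1}+m\mathbf{1}-\mathbf{1}}{m\mathbf{1}}\sum_{\nu\in\mathcal{O}}(2\pi i\nu)^{m\mathbf{1}}b(\nu)\,Q_\nu(z),\qquad Q_\nu(z):=\sum_{\gamma\in\Gamma_\infty\backslash\Gamma}(1|_K\gamma)(z)\,\mathrm{exp}(2\pi i\,\mathrm{Tr}(\nu\gamma z)).
\]
The remaining task is to recognize $Q_\nu$ in terms of the Poincar\'e series $P_{K,\nu}$, which is summed over the finer coset space $Z(\Gamma)N(\Gamma)\backslash\Gamma$ rather than $\Gamma_\infty\backslash\Gamma$. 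Using the tower $Z(\Gamma)N(\Gamma)\subseteq\Gamma_\infty\subseteq\Gamma$ and choosing the representatives $\left(\begin{smallmatrix}u&0\\0&1\end{smallmatrix}\right)$, $u\in\mathcal{O}^{\times+}$, for the fiber $Z(\Gamma)N(\Gamma)\backslash\Gamma_\infty$, the cocycle identity and $\left(\begin{smallmatrix}u&0\\0&1\end{smallmatrix}\right)w=uw$ yield
\[
P_{K,\nu}=\sum_{u\in\mathcal{O}^{\times+}}u^{K/2}\,Q_{\nu u}.
\]

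Finally I would group the $\nu$-sum into $\mathcal{O}^{\times+}$-orbits, writing $\nu=u\nu_0$ with $\nu_0$ ranging over $\mathcal{O}/\mathcal{O}^{\times+}$. The Fourier coefficient transformation $b(u\nu_0)=u^{l/2}b(\nu_0)$, together with $N(u)=1$ for $u\in\mathcal{O}^{\times+}$ (so that $u^{K/2}=u^{l/2}$ and $(2\pi i\,u\nu_0)^{m\mathbf{1}}=(2\pi i\nu_0)^{m\mathbf{1}}$), collapses the inner unit sum exactly into $P_{K,\nu_0}$, giving the claimed identity. I expect the main obstacle to be not the algebra but the analytic bookkeeping: ensuring absolute convergence so that all the rearrangements are legitimate, which is delicate for small weight. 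In particular, for $k=2$ the series $E_2$ is defined only through Hecke's trick, so the regularizing factor $|N(\cdots)|^{-\sigma}$ must be carried through the whole computation and the limit $\sigma\to0^+$ taken only at the end; and the $\nu_0=0$ term needs separate handling (it vanishes for $m>0$, while for $m=0$ it forces one to interpret $P_{K,0}$ and invoke Koecher's principle, which confines the remaining Fourier support to $\nu\gg0$).
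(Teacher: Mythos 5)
Your proposal is correct and arrives at the paper's own pivotal intermediate identity, but by a genuinely different first step. Writing $K=k\mathbf{1}+l+2m\mathbf{1}$, the paper proves $\frac{(k\mathbf{1}-\mathbf{1})!\,m\mathbf{1}!}{(k\mathbf{1}+m\mathbf{1}-\mathbf{1})!}[E_k,h]_m=\sum_{M\in\Gamma_\infty\backslash\Gamma}(1|_{k\mathbf{1}}M)(h^{(m\mathbf{1})}|_{l+2m\mathbf{1}}M)$ by substituting two explicit differentiation formulas (for $f^{(m\mathbf{1})}|_{l+2m\mathbf{1}}M$ and for $E_k^{(t)}$) into the definition \eqref{RCdef} and letting the binomial cross terms cancel; you reach the same display in one stroke from bilinearity, the slash-covariance $[f_1|_{k\mathbf{1}}\gamma,f_2|_l\gamma]_m=[f_1,f_2]_m|_{K}\gamma$, and the degeneration $[1,h]_m=\binom{k\mathbf{1}+m\mathbf{1}-\mathbf{1}}{m\mathbf{1}}h^{(m\mathbf{1})}$, which is visibly the $t=0$ term of \eqref{RCdef}. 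Your route is cleaner and hides the combinatorial cancellation inside the covariance property (legitimately cited to \cite{Choie2007Hilbert}; note that with the $\det(\gamma)^{k/2}$-normalized slash it holds with no extra determinant factor, as a check on $\gamma=\left(\begin{smallmatrix}\lambda&0\\0&1\end{smallmatrix}\right)$ confirms), while the paper's route is more self-contained. From there the two proofs coincide: your identity $P_{K,\nu}=\sum_{u\in\mathcal{O}^{\times+}}u^{K/2}Q_{u\nu}$, via the representatives $\left(\begin{smallmatrix}u&0\\0&1\end{smallmatrix}\right)$ of $Z(\Gamma)N(\Gamma)\backslash\Gamma_\infty$, is exactly the paper's inline absorption of $\left(\begin{smallmatrix}\varepsilon&0\\0&1\end{smallmatrix}\right)$ into $M$, and your unit bookkeeping is correct and complete: $b(u\nu_0)=u^{l/2}b(\nu_0)$, $(2\pi i\,u\nu_0)^{m\mathbf{1}}=N(u)^m(2\pi i\nu_0)^{m\mathbf{1}}=(2\pi i\nu_0)^{m\mathbf{1}}$, and $u^{K/2}=N(u)^{k/2}N(u)^m u^{l/2}=u^{l/2}$, using $N(u)=1$ and the parallelness of $k\mathbf{1}$ and $m\mathbf{1}$. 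Two small cautions rather than gaps: first, your auxiliary series $Q_\nu$ is not invariant under change of $\Gamma_\infty$-coset representatives (only $h^{(m\mathbf{1})}|_{K}\gamma$, equivalently $(1|_{k\mathbf{1}}\gamma)(h^{(m\mathbf{1})}|_{l+2m\mathbf{1}}\gamma)$, and the full double sum are), so one fixed set of representatives must be used throughout, as you implicitly do; second, the analytic caveats you flag --- carrying Hecke's regularization through at $k=2$, and the $\nu_0=0$ term at $m=0$, where the orbit decomposition of $\mathcal{O}$ under $\mathcal{O}^{\times+}$ overcounts the zero orbit --- are genuine, but they are equally present and left implicit in the paper's own proof, so on these points you are, if anything, more careful than the source.
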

\begin{proof}
By \eqref{RCdef}, the formulas
$$\left.f^{(m\mathbf{1})}\right|_{l+2 m\mathbf{1}}M=\sum_{\substack{t\in\mathbb{N}^n\\0\cdot\mathbf{1} \preceq t \preceq m\mathbf{1}}}\binom{m\mathbf{1}}{t}\left(\frac{c}{c z+d}\right)^{m\mathbf{1}-t} \frac{(l+m\mathbf{1}-\mathbf{1})!}{(l+t-\mathbf{1}) !}\left(\left.f\right|_{l}M\right)^{(t)}$$
for any holomorphic function $f$ on $\mathbb{H}^n$
and 
$$E_k^{(t)}(z)=\sum_{M=\left(\begin{smallmatrix}
* &* \\
c &d
\end{smallmatrix}\right) \in \Gamma_{\infty} \backslash \Gamma}\mathrm{det}(M)^{-\frac{t}{2}}\left((-c)^{t} \frac{(k\mathbf{1}+t-\mathbf{1})!}{(k\mathbf{1}-\mathbf{1}) !}\right)\left(\left.1\right|_{k\mathbf{1}+t} M\right)(z),\quad t\in\mathbb{N}^n,$$ 
we have 
\begin{align*}
&\frac{(k\mathbf{1}-\mathbf{1})!m\mathbf{1}!}{(k\mathbf{1}+m\mathbf{1}-\mathbf{1}) !}\left[E_k, h\right]_m=\sum_{M\in \Gamma_{\infty}\backslash \Gamma}\left(1|_{k\mathbf{1}}M\right)(z)\left(h^{(m\mathbf{1})}|_{l+2m\mathbf{1}} M\right)(z).
\end{align*}
By the unfolding trick, we obtain that the right-hand side of the last equation is equal to 
\begin{align*}
&\sum_{M\in \Gamma_{\infty} \backslash \Gamma} \sum_{\substack{\nu\in\mathcal{O}}} \mathrm{det}(M)^{\frac{k\mathbf{1}+l+2m\mathbf{1}}{2}}j(M,z)^{-(k\mathbf{1}+l+2m\mathbf{1})}(2\pi i \nu)^{m\mathbf{1}} b(\nu) \mathrm{exp}(2 \pi i \mathrm{Tr}(\nu Mz)) \\
=&\sum_{M\in \Gamma_{\infty}\backslash \Gamma} \sum_{\varepsilon \in \mathcal{O}^{\times+}}\sum_{\substack{\nu \in\mathcal{O}/\mathcal{O}^{\times+}}} \mathrm{det}(M)^{\frac{k\mathbf{1}+l+2m\mathbf{1}}{2}}j(M,z)^{-(k\mathbf{1}+l+2m\mathbf{1})}(2 \pi i \varepsilon \nu)^{m\mathbf{1}} \\
&\qquad\times  b(\varepsilon \nu)\mathrm{exp}(2\pi i\mathrm{Tr}(\varepsilon \nu Mz)) \\
=&\sum_{M\in \Gamma_{\infty}\backslash \Gamma} \sum_{\varepsilon \in \mathcal{O}^{\times+}} \sum_{\substack{\nu\in\mathcal{O}/ \mathcal{O}^{\times+}}} \mathrm{det}\left(\left(\begin{smallmatrix}
    \varepsilon & 0\\
    0 & 1
\end{smallmatrix}\right)M\right)^{\frac{k\mathbf{1}+l+2m\mathbf{1}}{2}}j\left(\left(\begin{smallmatrix}
\varepsilon & 0\\
0 & 1    
\end{smallmatrix}\right) M, z\right)^{-(k\mathbf{1}+l+2m\mathbf{1})}(2 \pi i \nu)^{m\mathbf{1}}\\
&\qquad\times   b(\nu) \mathrm{exp}(2 \pi i \mathrm{Tr}(\nu \varepsilon Mz))\\
=&\sum_{M\in Z(\Gamma)N(\Gamma)\backslash \Gamma} \sum_{\substack{\nu\in\mathcal{O}/ \mathcal{O}^{\times+}}} \mathrm{det}(M)^{\frac{k\mathbf{1}+l+2m\mathbf{1}}{2}}j(M, z)^{-(k\mathbf{1}+l+2m\mathbf{1})}(2 \pi i \nu)^{m\mathbf{1}} b(\nu) \mathrm{exp}(2 \pi i \mathrm{Tr}(\nu Mz))\\
=&\sum_{M\in Z(\Gamma)N(\Gamma)\backslash \Gamma} \sum_{\substack{\nu\in\mathcal{O}/ \mathcal{O}^{\times+}}} \left(1|_{k\mathbf{1}+l+2m\mathbf{1}}M\right)(z)(2 \pi i \nu)^{m\mathbf{1}} b(\nu) \mathrm{exp}(2 \pi i \mathrm{Tr}(\nu Mz))\\
=&\sum_{\substack{\nu\in\mathcal{O}/ \mathcal{O}^{\times+}}}(2 \pi i \nu)^{m\mathbf{1}} b(\nu)P_{k\mathbf{1}+l+2m\mathbf{1},\nu}(z)
\end{align*}
as desired.
\end{proof}
Note that here we reprove Proposition 1 of \cite{Choie2007Hilbert} for the group $\Gamma_0(\mathcal{O},\mathcal{O})$.

Then we relate the Petersson inner product $\left\langle f,\left[E_{k}, h\right]_{m}\right\rangle$ to a special value of the Rankin-Selberg $L$-function of $f$ and $h$, which plays an important role in the proof of Proposition \ref{[C,E]} and Proposition \ref{[E,E]} below.
\begin{Lem}\label{R-S-Rankin}
Let $l\in \mathbb{N}^{n}$, $m, k\in\mathbb{N}$ and $k\geq2$. Suppose that $f \in S_{k\mathbf{1}+l+2m\mathbf{1}}\left(\Gamma\right)$ and $h \in M_{l}\left(\Gamma\right)$ with Fourier coefficients $a(\nu)$ and $b(\nu)$ for $\nu\in\mathcal{O}$ respectively.
Then
\begin{align*}
&\left\langle f,\left[E_{k}, h\right]_{m}\right\rangle \\
=&D^{-\frac{1}{2}}(\overline{2\pi i})^{nm} \frac{(k\mathbf{1}+l+2m\mathbf{1}-2\cdot\mathbf{1})!(k\mathbf{1}+m\mathbf{1}-\mathbf{1})!}{(4 \pi)^{|k\mathbf{1}+l+2m\mathbf{1}|-n}(k\mathbf{1}-\mathbf{1}) ! m\mathbf{1} !} L(k+l_0+m-1,f,h'),
\end{align*}
where $h'(z)=\sum_{\substack{\nu\in\mathcal{O}}} \overline{b(\nu)} \mathrm{exp}(2\pi i \mathrm{Tr}(\nu z))$.
\end{Lem}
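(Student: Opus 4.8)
The plan is to combine the two previous lemmas with the Poincaré-series coefficient formula \eqref{Poincare-coeff}. Lemma \ref{RC-related-Poincare} already expresses $[E_k,h]_m$ as a linear combination of Poincaré series,
$$
[E_k,h]_m=\binom{k\mathbf{1}+m\mathbf{1}-\mathbf{1}}{m\mathbf{1}}\sum_{\nu\in\mathcal{O}/\mathcal{O}^{\times+}}(2\pi i\nu)^{m\mathbf{1}}\,b(\nu)\,P_{k\mathbf{1}+l+2m\mathbf{1},\nu}(z),
$$
so the first step is to pair this against $f$ and move the (conjugate-linear) inner product inside the finite sum over $\nu\in\mathcal{O}/\mathcal{O}^{\times+}$. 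Since the weight here is $k\mathbf{1}+l+2m\mathbf{1}$, I will set $\kappa=k\mathbf{1}+l+2m\mathbf{1}$ for bookkeeping and apply \eqref{Poincare-coeff} to each $\langle f,P_{\kappa,\nu}\rangle$, which produces $\overline{a(\nu)}$ together with the explicit factor $D^{-1/2}(4\pi)^{n-|\kappa|}\nu^{\mathbf{1}-\kappa}(\kappa-2\cdot\mathbf{1})!$.

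The second step is to reorganize the resulting sum into the Rankin-Selberg $L$-function. After conjugating the scalar $\binom{k\mathbf{1}+m\mathbf{1}-\mathbf{1}}{m\mathbf{1}}(2\pi i\nu)^{m\mathbf{1}}$ out of the inner product, the surviving sum over $\nu\in\mathcal{O}/\mathcal{O}^{\times+}$ has the shape $\sum_\nu \overline{b(\nu)}\,a(\nu)\,\nu^{\text{(power)}}$, where the exponent of $\nu$ must be tracked carefully: the $\nu^{\mathbf{1}-\kappa}$ from the Poincaré formula, the $\overline{(2\pi i\nu)}^{m\mathbf{1}}$ coming out of the conjugation, and the normalizing powers implicit in passing from the $a(\nu)$-indexing to the ideal-indexed coefficients $c(\mathfrak n,\cdot)$. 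The key identity to verify is that $N(\nu)^{\mathbf{1}-\kappa}$ together with the coefficient-normalization exponents $\nu_i^{(k_0'-k_i')/2}$ (with $k_0'$ the max component of $\kappa$, etc.) collapses exactly into $N(\mathfrak n)^{-(k+l_0+m-1)}$, so that the sum becomes $L(k+l_0+m-1,f,h')$ with $h'$ the form having coefficients $\overline{b(\nu)}$. Note that the shift $k+l_0+m-1$ should drop out naturally once $|\kappa|/n$ and the various $k_0$-shifts are combined, since $|\kappa|-n=n(k+l_0+m-1)$ after dividing by $n$; I will confirm this dimension count is consistent.

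The third step is purely cosmetic: collect all the scalar constants —$D^{-1/2}$, the factorials $(\kappa-2\cdot\mathbf{1})!=(k\mathbf{1}+l+2m\mathbf{1}-2\cdot\mathbf{1})!$ and $(k\mathbf{1}+m\mathbf{1}-\mathbf{1})!/((k\mathbf{1}-\mathbf{1})!\,m\mathbf{1}!)$ from the binomial coefficient, the powers of $4\pi$, and the factor $(\overline{2\pi i})^{nm}$— and match them against the stated closed form. The replacement of $\mathcal{O}/\mathcal{O}^{\times+}$ sums by ideal sums uses narrow class number one, so each nonzero ideal $\mathfrak n$ has a totally positive generator unique up to $\mathcal{O}^{\times+}$, making the identification of $\sum_{\nu\in\mathcal{O}/\mathcal{O}^{\times+}}$ with $\sum_{\mathfrak n}$ clean.

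The main obstacle will be the bookkeeping of the $\nu$-exponents and the interplay between the totally-positive-unit quotient and the ideal-indexed Dirichlet series. Specifically, \eqref{Poincare-coeff} and the definition of $c(\mathfrak n,\cdot)=a(\nu)\prod_i\nu_i^{(k_0-k_i)/2}$ involve different normalizations of the Fourier coefficients, and one has to check that the product $\overline{b(\nu)}\,a(\nu)\,\nu^{\,\cdots}$ really is well-defined on $\mathcal{O}/\mathcal{O}^{\times+}$ (using the transformation $a(\varepsilon\nu)=\varepsilon^{k/2}a(\nu)$ for each form) and that the unit-ambiguity cancels, leaving a genuine function of $\mathfrak n=(\nu)$. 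Getting the exponent arithmetic exactly right — so that it telescopes to $N(\mathfrak n)^{-(k+l_0+m-1)}$ and not some nearby shift — is where all the care is needed; the rest follows by assembling constants.
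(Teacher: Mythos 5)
Your plan follows the paper's proof exactly: pair $f$ against the Poincar\'e-series expansion from Lemma \ref{RC-related-Poincare}, apply \eqref{Poincare-coeff} term by term, and reindex the resulting Dirichlet series over $\nu\in\mathcal{O}/\mathcal{O}^{\times+}$, $\nu\gg0$, as the ideal-indexed series $L(k+l_0+m-1,f,h')$ via narrow class number one. Two corrections before you carry it out. First, \eqref{Poincare-coeff} produces $a(\nu)$, \emph{not} $\overline{a(\nu)}$: the Poincar\'e series sits in the antilinear slot of $\langle\,\cdot\,,\,\cdot\,\rangle$, so conjugation falls only on the scalars $(2\pi i\nu)^{m\mathbf{1}}b(\nu)$ coming from the bracket expansion (whence the factors $(\overline{2\pi i})^{nm}$ and $\overline{b(\nu)}$; the constants in \eqref{Poincare-coeff} are real). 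Your step two silently uses the correct combination $a(\nu)\overline{b(\nu)}$, but step one as written would yield $\overline{a(\nu)}\,\overline{b(\nu)}$ and a formula conjugate to the one claimed --- and this matters here, since $f$ is an arbitrary cusp form whose $a(\nu)$ need not be real.

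Second, your proposed consistency check ``$|\kappa|-n=n(k+l_0+m-1)$'' is false in general: with $\kappa=k\mathbf{1}+l+2m\mathbf{1}$ one has $|\kappa|-n=nk+|l|+2nm-n$, whereas $n(k+l_0+m-1)=nk+nl_0+nm-n$, and equality forces $|l|+nm=nl_0$, i.e.\ $m=0$ and $l$ parallel. Running that check would thus fail, but nothing in the proof needs it: the exponent $|\kappa|-n$ of $4\pi$ is merely the constant imported from \eqref{Poincare-coeff} and has nothing to do with the $L$-argument. The shift $k+l_0+m-1$ comes solely from your (correctly identified) key identity $\nu^{\mathbf{1}-\kappa}\cdot\nu^{m\mathbf{1}}=\nu^{(1-k-m)\mathbf{1}-l}=\nu^{l_0\mathbf{1}-l}\,N(\nu)^{-(k+l_0+m-1)}$, where $\nu^{l_0\mathbf{1}-l}$ is exactly absorbed by the two coefficient normalizations, because the weight $\kappa$ of $f$ satisfies $(\kappa_0-\kappa_i)/2=(l_0-l_i)/2$, the same exponents as for $h'$; the unit-ambiguity you flag does cancel, since the total $\varepsilon$-exponent is the parallel weight $(k/2+m+l_0)\mathbf{1}$ and $N(\varepsilon)=1$ for $\varepsilon\in\mathcal{O}^{\times+}$. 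Two smaller points: the sum over $\mathcal{O}/\mathcal{O}^{\times+}$ is infinite, not finite (interchanging it with the integral needs the usual absolute-convergence remark, which the paper also elides), and before invoking \eqref{Poincare-coeff} you should discard the orbits with $\nu$ not totally positive --- the $\nu=0$ term pairs to zero against the cuspidal $f$, and the Koecher principle kills the rest --- as the paper does by restricting to $\nu\gg0$.
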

\begin{proof}
By Lemma \ref{RC-related-Poincare} and  \eqref{Poincare-coeff}, we obtain that
\begin{align*}
&\langle f,[E_k,h]_m \rangle\\
=&\left\langle f,
\binom{k\mathbf{1}+m\mathbf{1}-\mathbf{1}}{m\mathbf{1}}
\sum_{\substack{\nu\in\mathcal{O}/ \mathcal{O}^{\times+} \\
\nu \gg 0}}(2\pi i \nu)^{m\mathbf{1}} b(\nu)P_{k\mathbf{1}+l+2m\mathbf{1},\nu}(z)\right\rangle\\
=&
\binom{k\mathbf{1}+m\mathbf{1}-\mathbf{1}}{m\mathbf{1}}
 D^{-\frac{1}{2}}(\overline{2 \pi i})^{nm}(k\mathbf{1}+l+2m\mathbf{1}-2\cdot\mathbf{1})!\sum_{\substack{\nu\in\mathcal{O}/\mathcal{O}^{\times+}\\ \nu\gg 0}}\frac{a(\nu)\overline{b(\nu)}\nu^{m\mathbf{1}}}{(4 \pi \nu)^{k\mathbf{1}+l+2m\mathbf{1}-\mathbf{1}}}  \\
=&D^{-\frac{1}{2}}(\overline{2\pi i})^{nm} \frac{(k\mathbf{1}+l+2m\mathbf{1}-2\cdot\mathbf{1}) !(k\mathbf{1}+m\mathbf{1}-\mathbf{1})!}{(4 \pi)^{|k\mathbf{1}+l+2m\mathbf{1}|-n}( k\mathbf{1}-\mathbf{1})!m\mathbf{1} !} \sum_{\substack{\nu\in \mathcal{O}/\mathcal{O}^{\times+} \\
\nu\gg0}} \frac{a(\nu)\overline{b(\nu)}}{\nu^{k\mathbf{1}+l+m\mathbf{1}-\mathbf{1}}}\\
=&D^{-\frac{1}{2}}(\overline{2\pi i})^{nm} \frac{(k\mathbf{1}+l+2m\mathbf{1}-2\cdot\mathbf{1}) !(k\mathbf{1}+m\mathbf{1}-\mathbf{1})!}{(4 \pi)^{|k\mathbf{1}+l+2m\mathbf{1}|-n}( k\mathbf{1}-\mathbf{1})!m\mathbf{1} !}\sum_{\substack{\nu\in\mathcal{O}/\mathcal{O}^{\times+}\\ \nu\gg 0}} \frac{a(\nu) \overline{b(\nu)}\nu^{l_0\mathbf{1}-l}}{N(\nu)^{k+l_0+m-1}}\\
=&D^{-\frac{1}{2}}(\overline{2\pi i})^{nm} \frac{(k\mathbf{1}+l+2m\mathbf{1}-2\cdot\mathbf{1}) !(k\mathbf{1}+m\mathbf{1}-\mathbf{1})!}{(4 \pi)^{|k\mathbf{1}+l+2m\mathbf{1}|-n}( k\mathbf{1}-\mathbf{1})!m\mathbf{1} !}L(k+l_0+m-1,f,h'). 
\end{align*}
We complete the proof.
\end{proof}

Note that the Rankin-Cohen brackets can be defined for general degrees $m\in\mathbb N^n$, and Lemma \ref{RC-related-Poincare} and Lemma \ref{R-S-Rankin} are still valid in such setting (see \cite{Choie2007Hilbert} for details). Since such general setting does not play a role in our treatment, we omit it.

\begin{Prop}\label{[C,E]}
Put $l\in\mathbb{N}^n$, $k,m\in\mathbb{N}$ and $k\geq4$. Suppose that $g=E_k \in \mathcal{E}_{k\mathbf{1}}(\Gamma)$ and $h\in S_l(\Gamma)$ is a normalized eigenform. If $\text{dim } S_{k\mathbf{1}+l+2m\mathbf{1}}(\Gamma)>1$, then $[g,h]_{m}$ is not an eigenform.
\end{Prop}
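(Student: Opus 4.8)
The plan is to argue by contradiction, combining the inner-product identity of Lemma \ref{R-S-Rankin} with the nonvanishing of a Rankin--Selberg Euler product inside its region of absolute convergence, the strategy going back to Duke and Ghate. First I would note that $[E_k,h]_m$ is automatically a cusp form: since $h$ is cuspidal its constant Fourier coefficient vanishes, so the $0$-th coefficient of every Rankin--Cohen bracket $[E_k,h]_m$ vanishes and $[E_k,h]_m\in S_{k\mathbf{1}+l+2m\mathbf{1}}(\Gamma)$. Suppose toward a contradiction that it is an eigenform (in particular nonzero). A normalized Hilbert eigenform is determined by its system of Hecke eigenvalues, which are exactly its normalized Fourier coefficients $c(\mathfrak n,\cdot)$, so multiplicity one holds and $S_{k\mathbf{1}+l+2m\mathbf{1}}(\Gamma)$ has an orthogonal basis of normalized eigenforms with pairwise distinct eigenvalue systems. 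As $\dim S_{k\mathbf{1}+l+2m\mathbf{1}}(\Gamma)>1$, I can choose a normalized eigenform $f_0$ whose eigenvalue system differs from that of $[E_k,h]_m$; self-adjointness of the Hecke operators then forces
$$\langle f_0,[E_k,h]_m\rangle=0.$$

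Next I would apply Lemma \ref{R-S-Rankin} with $f=f_0$, obtaining $\langle f_0,[E_k,h]_m\rangle=C\cdot L(k+l_0+m-1,f_0,h')$ where $C$ is the explicit constant of that lemma, which is nonzero since every factor appearing in it (the factorials, the power of $4\pi$, the power of $\overline{2\pi i}$, and $D^{-1/2}$) is nonzero. Because $h$ is a normalized eigenform its eigenvalues $c(\mathfrak n,h)$ are real, so $h'$ shares the same $\mathfrak n$-th Fourier coefficients as $h$ and $L(s,f_0,h')=L(s,f_0,h)$ is the Rankin--Selberg convolution of the two cuspidal eigenforms $f_0$, of weight $k\mathbf{1}+l+2m\mathbf{1}$ with maximal component $k+l_0+2m$, and $h$, of weight $l$ with maximal component $l_0$. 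Thus the entire question reduces to whether this $L$-value is nonzero.

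For the nonvanishing I would first locate the evaluation point. By the convergence statement of Section \ref{section2}, since both $f_0$ and $h$ are cuspidal eigenforms, $L(s,f_0,h)$ converges absolutely for $\mathrm{Re}(s)>\frac{(k+l_0+2m)+l_0}{2}=\frac{k}{2}+l_0+m$. The point in question satisfies $(k+l_0+m-1)-\bigl(\tfrac{k}{2}+l_0+m\bigr)=\tfrac{k}{2}-1\geq 1$ because $k\geq 4$ is even, so it lies strictly inside the half-plane of absolute convergence; this is precisely where the hypothesis $k\geq 4$ is used. There the coefficients $c(\mathfrak n,f_0)c(\mathfrak n,h)$ are multiplicative, so the Dirichlet series factors as an absolutely convergent Euler product $\prod_{\mathfrak p}L_{\mathfrak p}(s)$. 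Writing each local factor through the Hecke relations as $L_{\mathfrak p}(s)=\bigl(1-N(\mathfrak p)^{w-2s}\bigr)\prod_{i,j}\bigl(1-\alpha_i\beta_jN(\mathfrak p)^{-s}\bigr)^{-1}$ with $\alpha_i,\beta_j$ the Satake parameters of $f_0,h$ at $\mathfrak p$ and $w$ the associated weight exponent, I would check that both the numerator and the denominator are nonzero in this range: the numerator because $w-2\,\mathrm{Re}(s)<0$ there, and the denominator because $|\alpha_i\beta_jN(\mathfrak p)^{-s}|<1$ by the Ramanujan bound of \cite{Blasius2006Ramanujan}. Hence the product converges to a nonzero value, so $L(k+l_0+m-1,f_0,h')\neq 0$ and therefore $\langle f_0,[E_k,h]_m\rangle\neq 0$, contradicting the orthogonality above. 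This shows $[E_k,h]_m$ is not an eigenform.

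I expect the main obstacle to be the final step: making the Euler-product nonvanishing fully rigorous in the Hilbert setting. I must confirm that the Rankin--Selberg convolution of two Hilbert eigenforms genuinely admits a degree-four Euler product with the stated local shape, so that the numerator and denominator are controlled, and that the evaluation point lands in the \emph{open} half-plane of absolute convergence rather than on its boundary. It is exactly the strict inequality $\tfrac{k}{2}-1>0$, i.e. $k\geq 4$, that secures the latter and avoids the boundary phenomenon responsible for the exclusion of the case $g=h=E_k$ mentioned in the introduction.
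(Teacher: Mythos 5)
Your proof is correct and follows essentially the same route as the paper's: pick a second eigenform via $\dim S_{k\mathbf{1}+l+2m\mathbf{1}}(\Gamma)>1$, use orthogonality of distinct Hecke eigensystems, apply Lemma \ref{R-S-Rankin}, and derive a contradiction from the nonvanishing of the Rankin--Selberg Euler product at $s=k+l_0+m-1$, which lies in the half-plane $\mathrm{Re}(s)>\frac{k+2l_0+2m}{2}$ of absolute convergence precisely because $k>2$. The only difference is that you spell out details the paper leaves implicit (multiplicity one, cuspidality of the bracket, the explicit degree-four local factors), all of which are standard and consistent with the paper's citations.
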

\begin{proof}
Suppose that $[g,h]_{m}$ is an eigenform. Denote the Fourier coefficients of $h$ by
$b(\nu)$, $\nu\in\mathcal{O}$ and we know that $b(\nu)$'s are real. Since $\mathrm{dim}\ S_{k\mathbf{1}+l+2m\mathbf{1}}(\Gamma)>1$, we can pick an eigenform $f\in S_{k\mathbf{1}+l+2m\mathbf{1}}(\Gamma)$ distinct from the eigenform $[g,h]_{m}$. Let $a(\nu)$ be the Fourier coefficient of $f$, $\nu\in\mathcal{O}$. 

By Lemma \ref{R-S-Rankin}, we have
\begin{align*}
&\left\langle f,\left[g, h\right]_{m}\right\rangle \\
=&D^{-\frac{1}{2}}(\overline{2\pi i})^{nm} \frac{(k\mathbf{1}+l+2m\mathbf{1}-2\cdot\mathbf{1})!(k\mathbf{1}+m\mathbf{1}-\mathbf{1})!}{(4 \pi)^{|k\mathbf{1}+l+2m\mathbf{1}|-n}(k\mathbf{1}-\mathbf{1})!m\mathbf{1}!} \sum_{\substack{\nu\in\mathcal{O}/\mathcal{O}^{\times+} \\
\nu\gg0}} \frac{a(\nu)b(\nu)}{\nu^{k\mathbf{1}+l+m\mathbf{1}-\mathbf{1}}}\\
=&D^{-\frac{1}{2}}(\overline{2\pi i})^{nm} \frac{(k\mathbf{1}+l+2m\mathbf{1}-2\cdot\mathbf{1})!(k\mathbf{1}+m\mathbf{1}-\mathbf{1})!}{(4 \pi)^{|k\mathbf{1}+l+2m\mathbf{1}|-n}(k\mathbf{1}-\mathbf{1})!m\mathbf{1}!}L\left(k+l_0+m-1,f,h\right).
\end{align*}
The left-hand side vanishes by the orthogonality of Hecke eigenforms, so we only have to prove that the right-hand side does not vanish to derive a contradiction. 

Indeed, since both $f$ and $h$ are cuspidal eigenforms, $L(s, f, h)$ has an Euler product which converges absolutely in the region $\mathrm{Re}(s)>\frac{k+2l_0+2m}{2}$. Since $k+l_0+m-1$ lies in this region when $k>2$, we have  $L\left(k+l_0+m-1, f, h
\right)\neq 0$. Hence the right-hand side does not vanish and we finish the proof.
\end{proof}
\begin{Rmk}
Recall that $E_2(z)$, unlike the elliptic case, is also a Hilbert modular form when $n\geq2$. The grand Riemann hypothesis \cite{Borwein2008Riemann} predicts that all zeros of a normalized automorphic $L$-function with $0<\mathrm{Re}(s)<1$ lie on the line $\mathrm{Re}(s)=\frac{1}{2}$. So all zeros of  the Rankin-Selberg $L$-function $L(s,f,h)$ for $f$ of weight $k\in\mathbb{N}^n$ and $h$ of weight $l\in\mathbb{N}^n$ are assumed to lie on the line $\frac{k_0+l_0-1}{2}$ by the functional equation introduced in \cite[Theorem 1 in Section 9.5]{Hida1993elementary}. Hence Proposition \ref{[C,E]} still holds for $k=2$ under the grand Riemann hypothesis, since $k+l_0+m-1>\frac{k+2l_0+2m-1}{2}$ when $k=2$.
\end{Rmk}

Now we consider the case that both $g$ and $h$ are Eisenstein series.

\begin{Prop}\label{[E,E]}
Let $k,l, m \in \mathbb{N}$, $k,l\geq2$ and $k\neq l$. Let $g=E_k(z)$ and $h=E_l(z)$ be two eigenforms. Assume that \begin{align*}
  \text{dim } S_{k\mathbf{1}+l\mathbf{1}+2m\mathbf{1}}(\Gamma) \geq \begin{cases}1 & \text { if } m=0 \\ 2 & \text { if } m>0\end{cases},
\end{align*}
then $[g,h]_{m} \in M_{k\mathbf{1}+l\mathbf{1}+2m\mathbf{1}}(\Gamma)$ is not an eigenform.
\end{Prop}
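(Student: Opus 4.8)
The plan is to mirror the proof of Proposition \ref{[C,E]}, converting a Petersson inner product into a Rankin--Selberg $L$-value via Lemma \ref{R-S-Rankin} and then exploiting the nonvanishing of an Euler product in its region of absolute convergence. Since $[g,h]_m = \pm[h,g]_m$, the form $[E_k,E_l]_m$ is an eigenform if and only if $[E_l,E_k]_m$ is, so after relabelling I may assume $k>l$; as $k$ and $l$ are both even and distinct, this forces $k\geq l+2$. Because $E_l$ is a normalized eigenform with real Fourier coefficients, the form $h'$ appearing in Lemma \ref{R-S-Rankin} is again $E_l$, so for any cuspidal eigenform $f\in S_{k\mathbf{1}+l\mathbf{1}+2m\mathbf{1}}(\Gamma)$ the lemma yields
\[
\langle f,[E_k,E_l]_m\rangle = C\cdot L(k+l+m-1,\,f,\,E_l),
\]
with $C\neq 0$ an explicit constant built from factorials and powers of $2\pi i$, $4\pi$ and $D$.

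The heart of the matter is that this $L$-value does not vanish. Since $f$ is a cuspidal eigenform and $E_l$ is an eigenform, $L(s,f,E_l)$ is given by an absolutely convergent Euler product for $\mathrm{Re}(s)>\frac{k_0-1}{2}+l$, where $k_0=k+l+2m$ is the common weight component of $f$ and $E_l$ is non-cuspidal. A direct computation shows that the evaluation point $s=k+l+m-1$ lies in this half-plane precisely when $k>l+1$, which holds by $k\geq l+2$. In the region of absolute convergence each local factor is a finite nonzero number and the product converges to a nonzero value, so $L(k+l+m-1,f,E_l)\neq 0$, and hence $\langle f,[E_k,E_l]_m\rangle\neq 0$ for every cuspidal eigenform $f$ in this weight.

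I would then conclude separately in the two cases. When $m>0$, the bracket $[E_k,E_l]_m$ is a cuspform; assuming it is an eigenform and using $\dim S_{k\mathbf{1}+l\mathbf{1}+2m\mathbf{1}}(\Gamma)\geq 2$, I pick a cuspidal eigenform $f$ distinct from it, so that $\langle f,[E_k,E_l]_m\rangle=0$ by orthogonality of eigenforms, contradicting the nonvanishing above. When $m=0$, the product $E_kE_l$ has constant Fourier coefficient equal to $1\neq 0$, so if it were an eigenform it could not be cuspidal and would therefore be a scalar multiple of the unique normalized Eisenstein eigenform, forcing $E_kE_l=E_{k+l}$; choosing any cuspidal eigenform $f\in S_{(k+l)\mathbf{1}}(\Gamma)$, which exists since $\dim\geq 1$, gives $\langle f,E_kE_l\rangle=\langle f,E_{k+l}\rangle=0$, again contradicting the nonvanishing.

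The main obstacle, and the reason the hypotheses differ from those of Proposition \ref{[C,E]}, is that here the convolution is taken against the non-cuspidal $E_l$, which shrinks the region of absolute convergence: one needs the evaluation point $k+l+m-1$ to clear the abscissa $\frac{k_0-1}{2}+l$, and this is exactly where the assumption $k\neq l$, together with parity giving $k\geq l+2$, is used. It also explains why the borderline case $k=l$, that is $g=h=E_k$, must be excluded from the statement: there the evaluation point falls on the wrong side of the abscissa of absolute convergence and the Euler-product argument no longer delivers nonvanishing.
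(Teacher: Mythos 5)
Your proof is correct, and its skeleton --- orthogonality of distinct eigenforms, Lemma \ref{R-S-Rankin}, and nonvanishing of the resulting $L$-value --- is exactly the paper's; the one step where you genuinely diverge is the nonvanishing mechanism. The paper does not argue with the convolution's Euler product directly: it invokes Shimura's factorization \eqref{L-multiple}, $L(k+l+m-1,f,E_l)=L(k+l+m-1,f)\,L(k+m,f)/\zeta_F(k)$, and then Shimura's Proposition 4.16, which gives $L(s,f)\neq 0$ in the \emph{closed} region $\mathrm{Re}(s)\geq (k+l+2m+1)/2$; this delivers nonvanishing already for $k\geq l+1$. You instead place the evaluation point $k+l+m-1$ strictly inside the half-plane of absolute convergence $\mathrm{Re}(s)>\frac{k_0-1}{2}+l$ of $L(s,f,E_l)$, which requires $k>l+1$, i.e.\ $k\geq l+2$, and you bridge from $k\neq l$ to $k\geq l+2$ via evenness of the weights --- legitimate, since nonzero parallel-weight Eisenstein series exist only for even weight, so your hypotheses cover the same ground as the paper's. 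What each approach buys: the paper's factorization route is sharper at the boundary (it would survive $k=l+1$ if mixed parities occurred) and reduces everything to standard $L$-functions of $f$, for which nonvanishing on the edge is a quotable result; your route is more self-contained and mirrors Proposition \ref{[C,E]} uniformly, at the cost of one assertion you should flesh out: that each local factor of the mixed (cuspidal times Eisenstein) convolution is nonzero in the region of absolute convergence. This is not purely formal --- the local factor is the rational function $\bigl(1-N(\mathfrak{p})^{k_0+l-2-2s}\bigr)\prod_{i,j}\bigl(1-\alpha_i\beta_j N(\mathfrak{p})^{-s}\bigr)^{-1}$ in the Satake parameters of $f$ and $E_l$, and one must check the numerator does not vanish there (it does not, since $\mathrm{Re}(s)>\frac{k_0+l-2}{2}$ throughout the region); the cleanest justification is in fact precisely the identity \eqref{L-multiple}, under which your argument and the paper's essentially merge. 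Your treatment of the two cases at the end matches the paper's: for $m=0$ both you and the paper assert without proof that an eigenform with nonzero constant term lies in the Eisenstein subspace (this uses Hecke-stability of $S$ and $\mathcal{E}$ and that cuspidal and Eisenstein eigensystems differ), so you are at the paper's level of rigor there.
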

\begin{proof}
Suppose that $[g,h]_{m} \in M_{k\mathbf{1}+l\mathbf{1}+2m\mathbf{1}}(\Gamma)$ is an eigenform. If $m=0$, then $[g,h]_{0}=gh\in\mathcal{E}_{k\mathbf{1}+l\mathbf{1}}(\Gamma)$ since the constant term of $gh$ is not $0$ and $gh$ is an eigenform. If $m>0$, then  $[g,h]_{m}\in {S}_{k\mathbf{1}+l\mathbf{1}+2m\mathbf{1}}(\Gamma)$ (see \cite{Choie2007Hilbert} for more details). Hence we can always choose a normalized eigenform $f\in S_{k\mathbf{1}+l\mathbf{1}+2m\mathbf{1}}(\Gamma)$ distinct from $[g,h]_{m}$ under the assumption on $\text{dim }S_{k\mathbf{1}+l\mathbf{1}+2m\mathbf{1}}(\Gamma)$ such that $\left\langle f,\left[g, h\right]_{m}\right\rangle=0$. 

By Lemma \ref{R-S-Rankin},
\begin{align}\label{RS-[C,C]}
&\left\langle f,\left[g, h\right]_{m}\right\rangle \\\notag
=&D^{-\frac{1}{2}}(\overline{2 \pi i})^{nm} \frac{(k\mathbf{1}+l\mathbf{1}+2m\mathbf{1}-2\cdot\mathbf{1}) !(k\mathbf{1}+m\mathbf{1}-\mathbf{1})!}{(4 \pi)^{n(k+l+2m-1)}(k\mathbf{1}-\mathbf{1})!m\mathbf{1}!}L(k+l+m-1,f,h).
\end{align}
By the fourth equation on Page 674 of \cite{Shimura1978Hilbert}, we also have 
\begin{equation}\label{L-multiple}
L(k+l+m-1, f, h)=\frac{L(k+l+m-1,f) L(k+m,f)}{\zeta_F(k)}.
\end{equation}
It is obvious that the denominator of the right-hand side of \eqref{L-multiple} does not vanish, and we only have to show that the $L$-values in the numerator of the right-hand side does not vanish either.

Proposition 4.16 in \cite{Shimura1978Hilbert} shows that $L(s, f) \neq 0$ in the region $\mathrm{Re}(s)\geq(k+l+2m+1)/2$. If $ k+m\geq (k+l+2m+1)/2$, that is $k\geq l+1$, then the numerator on the right-hand side of \eqref{L-multiple} is non-vanishing. If $k<l+1$, we switch $g$ and $h$ and obtain again the desired non-vanishing in the case $l\geq k+1$ since $[h,g]_{m}=[g,h]_{m}$ or $-[g,h]_{m}$ depending on $nm$ is even or not.
This leaves out only the case $k=l$, which we have excluded. 
\end{proof}

\begin{Rmk}
Note that the case $k=l$ is missing in Proposition \ref{[E,E]}. In this case, following the lines in \cite{Lanphier2004RC}, we can prove that for any fixed $n$ and $m>0$,  $[E_{k},E_{k}]_m$ is no longer an eigenform if $k+m\geq k_0+4$ and $\mathrm{dim~}S_{2k\mathbf{1}+2m\mathbf{1}}(\Gamma)\geq2$, where $k_0$ is the largest even integer $k$ such that either $[E_k,E_k]_1$ or $[E_k,E_k]_2$ is an eigenform. Indeed, for even $n$, note first that such $k_0$ exists by the Hecke relations of the Fourier coefficients of $[E_{k},E_{k}]_1$ or $[E_{k},E_{k}]_2$, which can be proven by an argument similar to that of Section 3 in \cite{You-Zhang2021Hilbert}. Then for $k\geq k_0+2$, there exist two different normalized eigenforms $f,g\in S_{2k\mathbf{1}+2\cdot\mathbf{1}}(\Gamma)$ such that 
$$\langle f,[E_{k},E_{k}]_{1}\rangle\neq0, \quad \langle g,[E_{k},E_{k}]_{1}\rangle\neq0.$$
It follows that  $L(k+1,f)\neq0$ and  $L(k+1,g)\neq0$ by \eqref{RS-[C,C]} and \eqref{L-multiple}. Therefore, for general $m>1$, if $k+m-1\geq k_0+2$ and  $\mathrm{dim~}S_{2k\mathbf{1}+2m\mathbf{1}}(\Gamma)\geq2$, we also have the existence of two distinct eigenforms $f,g\in S_{2k\mathbf{1}+2m\mathbf{1}}(\Gamma)$ such that $L(k + m, f)\neq 0$ and $L(k + m, g)\neq 0$, implying that $[E_k, E_k]_m$ is not an eigenform by \eqref{RS-[C,C]} and \eqref{L-multiple}. When $n$ is odd, note that $[E_k, E_k]_1=0$ and this case follows similarly by simply replacing $[E_k, E_k]_1$ with $[E_k, E_k]_2$ and  $k+m-1\geq k_0+2$ with $k+m-2\geq k_0+2$ 
in the above argument.
\end{Rmk}

\section{A Conjecture of Freitag and Eigenform Product Identities}\label{section4}

In this section, we go back to the case $m=0$ and consider eigenform product identities.

By the main theorem, if there are only finitely many totally real number fields and finitely many weights $k,l$ such that $\text{dim }S_{k+l}(\Gamma)\leq1$, then we can vary $n$ in the main theorem of \cite{You-Zhang2021Hilbert}, and obtain that there are finitely many Rankin-Cohen
product eigenform identities, with a couple of exceptions and still under the extra assumption of narrow class number being equal to one.

This leads naturally to the problem of finding the asymptotic formula of $\text{dim }S_{k}(\Gamma)$. We conjecture the following, and will give a proof of the volume part for Hilbert modular groups.

Recall that $|k|=\sum_{i=1}^{n}k_i$ and  $N$ is the norm  defined before.
\begin{Conj}\label{Conjecture}
Let $\Gamma_{m}\subset\mathrm{GL}^+_2(\mathbb{R})^n$ be a sequence of groups commensurable with a Hilbert modular group such that for distinct $m,m'$ with $n=n(m)=n(m')$ the groups $\Gamma_m$ and $\Gamma_{m'}$ are not conjugate in $\mathrm{GL}^+_2(\mathbb{R})^n$. Then
\begin{align*}
&\lim_{m\rightarrow\infty}(4\pi)^{-n}\mathrm{vol}(\Gamma_{m}\backslash\mathbb{H}^n)=\infty, \\
\lim_{\substack{|k|+m\rightarrow \infty}}&\frac{\text{dim }S_k(\Gamma_{m})-(4\pi)^{-n}N(k-\mathbf{1})\mathrm{vol}(\Gamma_{m}\backslash\mathbb{H}^n)}{(4\pi)^{-n}N(k-\mathbf{1})\mathrm{vol}(\Gamma_{m}\backslash\mathbb{H}^n)}=0.
\end{align*}
\end{Conj}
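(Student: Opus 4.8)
The plan is to establish the two displayed limits by separate mechanisms, since the first is within reach of existing finiteness theorems while the second is the genuinely deep assertion. For the volume limit I would begin from the Siegel--Shimizu volume formula (equivalently Siegel's Euler number formula together with Gauss--Bonnet for the measure $dv$ fixed in Section \ref{section2}), which for the full Hilbert modular group of a totally real field $F$ of degree $n$ gives
\begin{equation*}
\mathrm{vol}(\mathrm{SL}_2(\mathcal{O}_F)\backslash\mathbb{H}^n)=2\pi^{-n}D_F^{3/2}\zeta_F(2),
\end{equation*}
so that the normalized volume equals $2(4\pi^2)^{-n}D_F^{3/2}\zeta_F(2)$; for a group merely commensurable with this one the covolume is a positive rational multiple fixed by the commensurability index and bounded below by Prasad's volume formula. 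Fixing $n$ first: if infinitely many distinct fields occur among the $\Gamma_m$ then $D_F\to\infty$ by Hermite--Minkowski and the formula forces divergence, while if only finitely many fields occur then infinitely many pairwise non-conjugate $\Gamma_m$ lie in one commensurability class and the Borel--Prasad finiteness theorem (finitely many conjugacy classes of bounded covolume) again forces divergence. To allow $n=n(m)$ to vary, I would invoke Odlyzko's discriminant bounds, which give $D_F^{1/n}\geq A$ with $A>(4\pi^2)^{2/3}\approx 11.6$ for all totally real fields of large degree; hence the normalized minimal covolume is $\geq 2(A^{3/2}/(4\pi^2))^n\to\infty$. Consequently, for any bound $V$ only finitely many degrees $n$ and finitely many conjugacy classes per degree can have normalized covolume at most $V$, and since the $\Gamma_m$ are pairwise non-conjugate throughout the sequence (automatically so across different degrees), the first limit follows.

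For the dimension limit I would invoke the explicit dimension formula for $S_k(\Gamma_m)$ coming from Shimizu's trace formula, equivalently Hirzebruch--Riemann--Roch together with the resolution of the cusp singularities. It takes the shape
\begin{equation*}
\dim S_k(\Gamma_m)=(4\pi)^{-n}N(k-\mathbf{1})\,\mathrm{vol}(\Gamma_m\backslash\mathbb{H}^n)+E_k(\Gamma_m)+C_k(\Gamma_m),
\end{equation*}
in which the identity contribution is exactly the conjectured main term (for $n=1$ this recovers $\dim S_k=(k-1)/12+O(1)$), $E_k$ is a finite sum over torsion conjugacy classes, and $C_k$ is the cusp contribution. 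The conjecture is then equivalent to
\begin{equation*}
\frac{E_k(\Gamma_m)+C_k(\Gamma_m)}{(4\pi)^{-n}N(k-\mathbf{1})\,\mathrm{vol}(\Gamma_m\backslash\mathbb{H}^n)}\longrightarrow 0\qquad(|k|+m\to\infty),
\end{equation*}
and the entire content is to show that both correction terms are negligible against the main term, uniformly as the group varies and $|k|$ possibly stays bounded.

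I would dispose of the elliptic term $E_k$ by class-number estimates. Each torsion class contributes a quantity bounded independently of $k$ by a product of cotangent and root-of-unity factors, and the number of elliptic classes is governed by class numbers of CM extensions of $F$, which are $\ll D_F^{1/2+\varepsilon}$. Since $\mathrm{vol}\asymp\pi^{-n}D_F^{3/2}\zeta_F(2)$, this already yields $E_k=o(\text{main term})$ whenever the degree stays bounded, and the margin in the Odlyzko bound keeps the estimate alive as $n\to\infty$. So the elliptic term is routine in every regime and is not the source of difficulty.

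The hard part will be the uniform control of the cusp term $C_k(\Gamma_m)$, which is the analogue of Hirzebruch's signature defect: a sum over the $h_F$ cusps of special values of partial zeta functions, that is, higher-dimensional Dedekind sums attached to the cusp stabilizers. For a fixed group $C_k$ is of strictly lower order in $k$, so the limit in $|k|$ is immediate; for $n=2$ the Hirzebruch--Zagier theory bounds each cusp contribution by $\ll D_F^{\varepsilon}$, whence $C_k\ll h_F D_F^{\varepsilon}\ll D_F^{1/2+\varepsilon}=o(\mathrm{vol})$ and the conjecture is actually provable in degree two. The genuine obstruction lives in the regime $n\geq 3$ with the group varying and $|k|$ bounded: there the cusp singularities are resolved by toroidal rather than cyclic data, the associated higher-dimensional Dedekind sums are far less tractable, and no bound is known that is simultaneously uniform over all commensurability classes, all cusps, and all degrees while remaining strictly smaller than $\mathrm{vol}\asymp D_F^{3/2}$. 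Isolating and proving such a uniform cusp estimate is precisely the analytic content that keeps Freitag's conjecture open, which is why the present paper proves the volume part unconditionally (Theorem \ref{Conjecture-firstformula}) and carries the dimension part only as a hypothesis in Theorem \ref{eigenformidentity}.
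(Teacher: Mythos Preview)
The statement is a conjecture, and the paper does not claim to prove it. What the paper proves is Theorem~\ref{Conjecture-firstformula}, namely the volume limit for the particular sequence consisting of the Hilbert modular groups $\Gamma_F$ themselves (one per totally real field), after verifying via Lemma~\ref{Conjugate} that these are pairwise non-conjugate. The dimension limit is left entirely as a hypothesis, exactly as you note in your final sentence. So your proposal should be read not as a competing proof of the conjecture but as a sketch of what is provable and where the genuine obstruction lies; on that reading it is accurate.

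For the part the paper does prove, your approach and the paper's are close in spirit but differ in scope and in the precision of the discriminant estimate. Both rest on the Siegel--Shimizu/Shimura volume formula and on Odlyzko-type lower bounds for $D_F^{1/n}$. The paper restricts to the Hilbert modular groups themselves, so it never needs Borel--Prasad or any minimal-covolume argument; it simply bounds $(4\pi)^{-n}\mathrm{vol}\geq 2^{2-3n}\pi^{-2n}D^{3/2}$ and then, rather than quoting an asymptotic Odlyzko constant as you do, invokes Stark's explicit inequality $\log D\geq n(\log\pi-\psi(s/2))-2/s-2/(s-1)$ at $s=1+n^{-1/2}$ and checks numerically that $(\pi e^{1.9})^{3/2}>8\pi^2$, leaving a margin $\rho>1$ that makes the estimate effective for all large $n$. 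Your version is more ambitious---it would cover arbitrary commensurable $\Gamma_m$ by appealing to Borel--Prasad finiteness within a commensurability class and to Prasad's formula for the minimal covolume---but those steps are only sketched, whereas the paper's narrower claim is carried out with explicit constants. Note also that your displayed volume formula omits the factor $[\mathcal{O}^{\times+}:(\mathcal{O}^{\times})^2]^{-1}$ present in the paper's $\Gamma_0(\mathcal{O},\mathcal{O})$; this is harmless for the limit but worth tracking.

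Your discussion of the dimension limit---reducing it via Shimizu's trace formula to uniform control of the elliptic and cusp contributions, disposing of the elliptic term by CM class-number bounds, and isolating the higher-dimensional Dedekind sums at the cusps for $n\geq 3$ as the real obstruction---goes well beyond anything the paper attempts. The paper offers no argument toward the dimension part; it only cites Freitag's original formulation and the known cases $n=1$ (Thompson) and $n=2$ (Freitag). So there is nothing to compare here: you have correctly identified why the second limit remains conjectural, and the paper simply assumes it where needed.
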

Conjecture \ref{Conjecture} is slightly stronger than the original conjecture introduced by Freitag on Page 131 of \cite{Freitag1990Hilbert}. The extra scalar $(4\pi)^{-n}$ results from different normalizations of the volume. Note that the sequence $\Gamma_{m}\subset\mathrm{GL}^+_2(\mathbb{R})^n$ and we include general weights. By Theorem $4.8$ on Page $122$ of \cite{Freitag1990Hilbert}, the numerator of the second formula of Freitag's conjecture is exactly the contribution of elliptic points and cusps to the dimension of cuspform space of weight $2\cdot\mathbf{1}$, and the responding denominator is the contribution of $\mathrm{vol}(\Gamma_m\backslash \mathbb{H}^n)$ to the dimension. It is natural to conjecture that the same statement holds for general weight, with the extra factor $N(k-\mathbf{1})$ added according to the dimension formula on general weight introduced in Theorem $3.5$ on Page $110$ of \cite{Freitag1990Hilbert}. The dimension part of the conjecture is reasonable since the contributions of elliptic points and cusps to $\text{dim }S_k(\Gamma_{m})$ are expected to be small. Freitag's conjecture has been proved for the case $n=1$ in \cite{Thompson1979finiteness} and the case of Hilbert modular groups when $n=2$ in \cite{Freitag1990Hilbert}, but remains open for other cases. 

Denote by $\Gamma_{F}$ the Hilbert modular group $\Gamma$ for $F$ and $\mathcal{O}^{\times+}_{F}$ the group of totally positive units of $F$. We first prove that different totally real number fields have non-conjugate Hilbert modular groups.

\begin{Lem}\label{Conjugate}
If $F$ and $F'$ are different totally real number fields of degree $n$, then $\Gamma_{F}$ and $\Gamma_{F'}$ are not conjugate in $\mathrm{GL}^+_2(\mathbb{R})^n$. 
\end{Lem}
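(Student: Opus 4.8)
The plan is to attach to each embedded Hilbert modular group a subset of $\R^n$ that is manifestly invariant under $\mathrm{GL}^+_2(\R)^n$-conjugation and that determines the subfield $F\subset\R$ completely. For $\gamma\in\Gamma_F$, write $\gamma^{(i)}$ for its image in the $i$-th $\mathrm{GL}_2(\R)$-factor under the embedding, and set
\[
T(\Gamma_F)=\{(\mathrm{tr}\,\gamma^{(1)},\dots,\mathrm{tr}\,\gamma^{(n)}):\gamma\in\Gamma_F\}\subset\R^n .
\]
Since conjugation in $\mathrm{GL}^+_2(\R)^n$ acts factor by factor and the trace is a conjugacy invariant of each $\mathrm{GL}_2(\R)$-factor, the set $T(\Gamma_F)$ is unchanged when $\Gamma_F$ is replaced by any $\mathrm{GL}^+_2(\R)^n$-conjugate. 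Hence, if $\Gamma_F$ and $\Gamma_{F'}$ were conjugate we would have $T(\Gamma_F)=T(\Gamma_{F'})$, and it suffices to show that $T(\Gamma_F)$ recovers $F$.

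Next I would compute $T(\Gamma_F)$ exactly. Writing $\iota_F\colon F\to\R^n$, $\iota_F(x)=(\tau_1(x),\dots,\tau_n(x))$, the trace $a+d$ of any $\gamma=\left(\begin{smallmatrix}a&b\\ c&d\end{smallmatrix}\right)\in\Gamma_F$ lies in $\mathcal O$, and its trace vector is exactly $\iota_F(a+d)$; thus $T(\Gamma_F)\subseteq\iota_F(\mathcal O)$. For the reverse inclusion I would exhibit, for each $\alpha\in\mathcal O$, an element of $\Gamma_F$ of trace $\alpha$. Because the narrow class number is one, the class number is one and the different is principal, say $\mathfrak d=(\delta)$; then
\[
\gamma_\alpha=\begin{pmatrix}\alpha & -\delta^{-1}\\ \delta & 0\end{pmatrix}
\]
has entries in $\mathcal O$, $\mathfrak d^{-1}$, $\mathfrak d$, $\mathcal O$ respectively, determinant $1\in\mathcal O^{\times+}$, and trace $\alpha$. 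Consequently $T(\Gamma_F)=\iota_F(\mathcal O)$, the image of the ring of integers under the canonical all-embeddings map into $\R^n$.

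Finally I would recover $F$ from the lattice $\iota_F(\mathcal O)$. Its $\Q$-span inside $\R^n$ equals $\iota_F(F)$, since $\mathcal O$ generates $F$ over $\Q$ and $\iota_F$ is a ring homomorphism; projecting onto the first coordinate via $\pi_1\colon\R^n\to\R$ gives $\tau_1(F)=F$, because $\tau_1$ is the chosen inclusion $F\subset\R$. Thus the subfield $F\subseteq\R$ is intrinsically determined by $T(\Gamma_F)$, namely $F=\pi_1(\mathrm{span}_{\Q}\,T(\Gamma_F))$. Combining this with the conjugation invariance above, conjugacy of $\Gamma_F$ and $\Gamma_{F'}$ forces $F=\pi_1(\mathrm{span}_{\Q}\,T(\Gamma_F))=\pi_1(\mathrm{span}_{\Q}\,T(\Gamma_{F'}))=F'$ as subfields of $\R$, which is the contrapositive of the lemma. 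The one point requiring care—the main obstacle—is the reverse inclusion $\iota_F(\mathcal O)\subseteq T(\Gamma_F)$, that is, realizing every algebraic integer as a trace of a group element; this is precisely where the principality of $\mathfrak d$ (hence the class-number-one hypothesis) is used, and without it one would only control the $\mathcal O$-module structure rather than the full ring $\mathcal O$.
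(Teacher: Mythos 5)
Your argument is correct where it applies, and it takes a genuinely different route from the paper's. You package the problem into a single conjugacy invariant, the set of trace vectors $T(\Gamma_F)\subset\R^n$, compute it exactly as $\iota_F(\mathcal{O})$, and recover $F$ by taking the $\Q$-span and projecting to the first coordinate; trace invariance does all the work and no computation with the unknown conjugator $A$ is ever needed. The paper instead conjugates the specific diagonal matrices $B_\varepsilon=\left(\begin{smallmatrix}\varepsilon&0\\0&1\end{smallmatrix}\right)$, $\varepsilon\in\mathcal{O}_F^{\times+}$, reads off from the diagonal entries of $A_1B_\varepsilon A_1^{-1}$ that $\varepsilon\in F'$, and then invokes Dirichlet's unit theorem to conclude that $\mathcal{O}_F^{\times+}$ generates $F$. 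Your route is structurally cleaner and recovers the full ring $\mathcal{O}$ rather than only the totally positive units; the paper's route, in exchange, uses no arithmetic input beyond the units themselves.

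That exchange is precisely where the one genuine caveat lies: your reverse inclusion $\iota_F(\mathcal{O})\subseteq T(\Gamma_F)$ rests on the principality of $\mathfrak{d}$, which you justify by narrow class number one. But the lemma as stated carries no class-number hypothesis, and it is applied in Theorem \ref{Conjecture-firstformula} to the Hilbert modular groups of \emph{all} totally real number fields (the volume part of Freitag's conjecture is proved unconditionally); for a general totally real field the different need not be principal, and then your matrix $\gamma_\alpha$ need not exist in $\Gamma_F$, so your proof as written covers only a subclass of fields while the paper's proof covers all of them. Fortunately the gap closes inside your own framework without Chebotarev-type heavy machinery: you do not need every $\alpha\in\mathcal{O}$ as a trace. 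The elements $B_\varepsilon$ above lie in $\Gamma_F$ for any field, so $T(\Gamma_F)$ contains $\iota_F(\varepsilon+1)$ for all $\varepsilon\in\mathcal{O}_F^{\times+}$; taking $\varepsilon=1$ gives $1\in\mathrm{span}_\Q$, hence $\mathcal{O}_F^{\times+}\subseteq\mathrm{span}_\Q T(\Gamma_F)$. Since $\mathcal{O}_F^{\times+}$ is multiplicatively closed, its $\Q$-span is a finite-dimensional $\Q$-subalgebra of $F$, hence a subfield whose unit group contains the rank-$(n-1)$ group $\mathcal{O}_F^{\times+}$, and Dirichlet's unit theorem forces this subfield to be all of $F$. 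Combined with the trivial inclusion $T(\Gamma_F)\subseteq\iota_F(\mathcal{O})$, your recovery step $F=\pi_1\left(\mathrm{span}_\Q T(\Gamma_F)\right)$ then goes through verbatim for every totally real field--at the cost of re-importing the paper's Dirichlet argument in place of your explicit $\gamma_\alpha$.
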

\begin{proof}
Let $\{\tau_1,\cdots,\tau_n\}$ and $\{\sigma_1,\cdots,\sigma_n\}$ be the set of real embeddings of $F,F'$ with $\tau_1,\sigma_1$ being the inclusions respectively. Suppose that $\Gamma_F$ is conjugate to $\Gamma_{F'}$ in $\mathrm{GL}^+_2(\mathbb{R})^n$, so there exists  $$A=(A_1,\cdots,A_n)=\left(\left(\begin{matrix}a_1 & b_1 \\ c_1 & d_1\end{matrix}\right),\cdots,\left(\begin{matrix}a_n & b_n \\ c_n & d_n\end{matrix}\right)\right)\in\mathrm{GL}^{+}_2(\mathbb R)^n$$
such that $A\Gamma_FA^{-1}=\Gamma_{F'}$. By rescaling, we may assume that $A\in \mathrm{SL}_2(\mathbb R)^n$. To show that $F=F'$, by symmetry we will prove that $F\subset F'$. To this end, we will focus on the first coordinate and only employ the equality $A_1\Gamma_FA_1^{-1}=\Gamma_{F'}$. Here the embeddings $\tau_1,\sigma_1$ are implicit.

We first prove that $\mathcal{O}^{\times+}_F\subset F'$. Fix any $\varepsilon\in \mathcal{O}^{\times+}_F$ and consider the matrix
\[
B_\varepsilon=\begin{pmatrix}
    \varepsilon & 0\\0&1
\end{pmatrix}\in \Gamma_F.
\]
Our assumption implies that \[A_1B_\varepsilon  A_1^{-1}=\begin{pmatrix}
a_1d_1\varepsilon-b_1c_1& a_1b_1(1-\varepsilon)\\
c_1d_1(\varepsilon-1)& a_1d_1-\varepsilon b_1c_1
\end{pmatrix}
\in\Gamma_{F'}.\] Since $a_1d_1-b_1c_1=1$, from the diagonal entries, we obtain that
\[\varepsilon-b_1c_1(1-\varepsilon),\quad 1+b_1c_1(1-\varepsilon)\quad \in F'.\]
Therefore, $1+\varepsilon$, hence $\varepsilon$ belongs to $F'$. Therefore, $\mathcal{O}^{\times+}_F\subset F'$ as desired.

We then prove that $\mathcal{O}^{\times+}_F$ generates $F$, from which the statement $F\subset F'$ follows trivially. The subfield $L$ of $F$ generated by $\mathcal{O}^{\times+}_F$ is again totally real and has its group of units containing $\mathcal{O}^{\times+}_F$. But by Dirichlet unit theorem, $\mathcal{O}^{\times+}_F$ is a free abelian group of rank $n-1$, so again by Dirichlet's unit theorem, the degree of $L$ must be at least $n$. This forces that $L=F$, completing the whole proof.
\end{proof}

Now we prove the volume part of Conjecture \ref{Conjecture} for the sequence of Hilbert modular groups.

\begin{Thm}\label{Conjecture-firstformula}
Let $\{F_m\}$ be a sequence of totally real number fields. Let $n(m), D(m)$ and $\Gamma_m$ be the degree, the discriminant and the Hilbert modular group of $F_m$ respectively. Then   
$\lim_{m\rightarrow\infty}(4\pi)^{-n}\mathrm{vol}(\Gamma_{m}\backslash\mathbb{H}^n)=\infty$.
\end{Thm}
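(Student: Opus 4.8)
The plan is to turn the geometric quantity into an arithmetic one via Siegel's volume formula and then reduce everything to a lower bound on the root discriminant. First I would recall Siegel's formula for the Hilbert modular group (see \cite{Freitag1990Hilbert}),
$$\mathrm{vol}(\Gamma_m\backslash\mathbb{H}^{n})=2\pi^{-n}D(m)^{3/2}\zeta_{F_m}(2),$$
which one may also derive from the functional equation of $\zeta_{F_m}$ through the proportionality identity $(4\pi)^{-n}\mathrm{vol}(\Gamma_m\backslash\mathbb{H}^n)=2^{1-n}|\zeta_{F_m}(-1)|$. Dividing by $(4\pi)^{n}$ gives
$$(4\pi)^{-n}\mathrm{vol}(\Gamma_m\backslash\mathbb{H}^n)=\frac{2\,D(m)^{3/2}\zeta_{F_m}(2)}{(2\pi)^{2n}}.$$
Since $\zeta_{F_m}(2)=\prod_{\mathfrak p}(1-N(\mathfrak p)^{-2})^{-1}>1$, it suffices to show that $D(m)^{3/2}/(2\pi)^{2n(m)}\to\infty$. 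Writing this as $\big((D(m)^{1/n(m)})^{3/2}/(2\pi)^{2}\big)^{n(m)}$, the target becomes the assertion that the root discriminant $D(m)^{1/n(m)}$ eventually beats the threshold $(2\pi)^{4/3}\approx 11.6$ by a fixed margin whenever the degree $n(m)$ is large.

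It then suffices to prove that for every $B>0$ only finitely many indices satisfy $D(m)^{3/2}\le B(2\pi)^{2n(m)}$, and I would separate the fields according to their degree. For a fixed degree $n_0$ this inequality forces the discriminant of $F_m$ to be bounded, so by the Hermite--Minkowski finiteness theorem only finitely many distinct totally real fields of degree $n_0$ can occur; since the fields $F_m$ are pairwise distinct, only finitely many $m$ with $n(m)=n_0$ lie in the bad set. Consequently, for any fixed bound $N_0$, only finitely many $m$ with $n(m)\le N_0$ belong to the bad set.

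The large-degree range is the crux. Here I would invoke Odlyzko's unconditional lower bounds for discriminants: the root discriminant of a totally real field of large degree exceeds a fixed constant well above $20$ (its asymptotic value being at least $4\pi e^{\gamma}\approx 22.4$), so there exist a degree $N_0$ and a constant $c>(2\pi)^{4/3}$ with $D(m)^{1/n(m)}\ge c$ whenever $n(m)>N_0$. For such $m$,
$$\frac{2\,D(m)^{3/2}}{(2\pi)^{2n(m)}}\ge 2\Big(\frac{c^{3/2}}{(2\pi)^{2}}\Big)^{n(m)},$$
and because $c^{3/2}>\big((2\pi)^{4/3}\big)^{3/2}=(2\pi)^{2}$ the base exceeds $1$, so the right-hand side tends to $\infty$ as $n(m)\to\infty$. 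Combining this with the bounded-degree case shows that the bad set is finite for every $B$, which is exactly $\lim_{m\to\infty}(4\pi)^{-n}\mathrm{vol}(\Gamma_m\backslash\mathbb{H}^n)=\infty$.

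The main obstacle is precisely this large-degree estimate. The elementary Minkowski bound only yields a root discriminant $\gtrsim e^{2}\approx 7.39$, which lies below the threshold $(2\pi)^{4/3}\approx 11.6$; with that bound the displayed quantity would tend to $0$ rather than $\infty$, so Minkowski alone is useless here. One genuinely needs a discriminant lower bound beyond Minkowski, and the quantitative heart of the proof is the verification that the available unconditional (Odlyzko-type) root-discriminant bound for totally real fields comfortably clears $(2\pi)^{4/3}$.
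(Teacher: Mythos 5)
Your strategy is essentially the paper's own: convert the normalized volume into arithmetic data via the Siegel--Shimura volume formula, discard $\zeta_{F}(2)>1$, dispose of bounded degrees with Hermite--Minkowski finiteness (using pairwise distinctness of the $F_m$, which the paper makes implicit by enumerating all totally real fields by $n+D$), and beat the exponential $(2\pi)^{2n}$ in large degree with a root-discriminant lower bound from the very survey the paper cites \cite{Odlyzko1990bounds} (the paper uses Stark's explicit inequality with $s=1+1/\sqrt{n}$ and the digamma function, plus a small $\delta>0$; your ``bad set is finite for every $B$'' bookkeeping handles the mixed regime where $n$ and $D$ grow together just as well).

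There is, however, one genuine flaw: your volume formula is wrong for this paper's group. Here $\Gamma=\Gamma_0(\mathcal{O},\mathcal{O})$ has determinants in $\mathcal{O}^{\times+}$, not determinant $1$, and Shimura's formula (2.31), which the paper quotes, reads
\begin{equation*}
(4\pi)^{-n}\mathrm{vol}(\Gamma_m\backslash\mathbb{H}^n)=2^{1-2n}\pi^{-2n}D(m)^{3/2}\zeta_{F_m}(2)\,[\mathcal{O}^{\times+}:(\mathcal{O}^{\times})^{2}]^{-1}.
\end{equation*}
Your identity $\mathrm{vol}(\Gamma_m\backslash\mathbb{H}^n)=2\pi^{-n}D(m)^{3/2}\zeta_{F_m}(2)$ (equivalently $(4\pi)^{-n}\mathrm{vol}=2^{1-n}|\zeta_{F_m}(-1)|$) is Siegel's formula for the determinant-one Hilbert modular group; it omits the unit-index factor, which can be as small as $2^{1-n}$. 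Since the omitted factor is $\leq 1$, you have proved divergence of an \emph{overestimate} of the volume, which does not imply divergence of the volume itself, so as written the argument has a hole. The repair is exactly the paper's move: bound $[\mathcal{O}^{\times+}:(\mathcal{O}^{\times})^{2}]\leq 2^{n-1}$ (Dirichlet's unit theorem), which raises your root-discriminant threshold from $(2\pi)^{4/3}\approx 11.6$ to $\bigl(8\pi^2\bigr)^{2/3}\approx 18.4$. Your Odlyzko input ($4\pi e^{\gamma}\approx 22.4$ asymptotically for totally real fields) still clears this corrected threshold, as does the paper's Stark bound $\pi e^{1.9}\approx 21.0$ --- indeed the paper's explicit verification $(\pi e^{1.9})^{3/2}>8\pi^2$ is precisely the corrected comparison. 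With that factor inserted your proof goes through, and your closing observation that Minkowski's bound ($e^{2}\approx 7.39$) is insufficient remains valid, a fortiori, against the larger threshold.
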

\begin{proof}
The preceding lemma shows that the sequence of Hilbert modular groups over all totally real number fields satisfies the assumption of Conjecture \ref{Conjecture}.

It is clear that we only need to prove the statement for all totally real number fields.  Since rearranging a sequence does not affect its limit, we may enumerate $F_m$ as follows: by Hermite's theorem \cite[(2.16) in Section 3]{Neukirch1999Algebraic}, there are finitely many number fields with bounded discriminant, so the number of totally real number fields with fixed $n+D$ is finite and hence we may enumerate all totally real number fields as $n+D$ grows. %For ease of notation, we shall write $n,D$ for $n(m),D(m)$ respectively if no confusion is possible.

We fix an integer $n\geq 2$ and consider the subsequence of $F_{m}$ with $n(m)=n$. By (2.31) of \cite{Shimura1978Hilbert}, the normalized volume
\begin{align*}
(4\pi)^{-n}\mathrm{vol}(\Gamma_m\backslash\mathbb{H}^n)&=2^{1-2n}\pi^{-2n}D(m)^{\frac{3}{2}}\zeta_F(2)[\mathcal{O}^{\times+}:{\mathcal{O}^{\times}}^2]^{-1}\\
&>2^{1-2n}\pi^{-2n}D(m)^{\frac{3}{2}}2^{1-n}=2^{2-3n}\pi^{-2n}D(m)^{\frac{3}{2}},
\end{align*}
since $\zeta_F(2)>1$ and $[\mathcal{O}^{\times+}:{\mathcal{O}^{\times}}^2]\leq 2^{n-1}$. It follows that for the subsequence of fixed $n$ the limit of the normalized volume is $\infty$, since now $D(m)\rightarrow\infty$.

To take care of the whole sequence, we recall Stark's bound \cite[(1.5)]{Odlyzko1990bounds} for a general totally real number field $F$ with degree $n$ and discriminant $D$, 
$$\log D\geq n(\log \pi-\psi(s/2))-\frac{2}{s}-\frac{2}{s-1},$$
with $s=1+\frac{1}{\sqrt{n}}$ and $\psi$ being the digamma function. Since $\psi(1/2)\approx -1.96351$ and $\psi$ is increasing on $(0,\infty)$, there exists a positive integer $n_0$ such that for all $n>n_0$, $\psi(s/2)< -1.9$.  Since the normalized volume of all subsequences with $n(m)\leq n_0$ approaches $\infty$, we only have to show that after removing these subsequences the normalized volume of resulting sequence approaches $\infty$.

For each field $F$ in the resulting sequence, since $n(m)>n_0$, Stark's bound implies that
\[\log D> n(\log \pi+1.9)-2(1+\sqrt{n})\]
or $D> \pi^ne^{1.9n}/e^{2(1+\sqrt{n})}$. One can verify the following inequality for concrete constants
\[(\pi e^{1.9})^\frac{3}{2}>8\pi^2.\]
Therefore we can choose an absolute constant $\delta>0$, small enough, such that \[(\pi e^{1.9})^{\frac{3}{2}-\delta}>8\pi^2.\]
Set $\rho=(\pi e^{1.9})^{\frac{3}{2}-\delta}/8\pi^2$, so $\rho>1$ and we have
\begin{align*}
(4\pi)^{-n}\mathrm{vol}(\Gamma_m\backslash\mathbb{H}^n)&=2^{1-2n}\pi^{-2n}D^{\frac{3}{2}}\zeta_F(2)[\mathcal{O}^{\times+}:{\mathcal{O}^{\times}}^2]^{-1}\\
&>2^{2-3n}\pi^{-2n}D^\delta D^{\frac{3}{2}-\delta}\\
&\geq 2^{2-3n}\pi^{-2n}D^\delta (\pi^ne^{1.9n}e^{-2(1+\sqrt{n})})^{\frac{3}{2}-\delta}\\
&\geq 2^2D^\delta (\rho^n e^{-3(1+\sqrt{n})})\\
&=2^2D^\delta e^{n\log\rho -3(1+\sqrt{n})},
\end{align*}
where for ease of notation we wrote $n,D$ for $n(m), D(m)$ respectively. Now it is clear that if $m$ is big, then $n$ or $D$ is big, so the normalized volume approaches $\infty$ since $\log \rho>0$. This completes the proof.
\end{proof}

With the asymptotic formula for dimension in Conjecture \ref{Conjecture}, we finally derive the following finiteness theorem on eigenform product identities for Hilbert modular groups over all totally real number fields of narrow class number one. 

\begin{Thm}\label{eigenformidentity}
Assuming the dimension part of Conjecture \ref{Conjecture}, amongst all totally real number fields $F$ of narrow class number one, the equation $f=gh$ has only finitely many solutions in the triple $(f,g,h)$ with $f,g,h$ being Hecke eigenforms of full level and weight $2\cdot\mathbf{1}$ or greater, $g,h$ being normalized and $g$ an Eisenstein series such that $g\neq h$ if $h$ is an Eisenstein series and $g$ having parallel weight greater than $2$ if $h$ is cuspidal.
\end{Thm}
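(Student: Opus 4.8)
The plan is to combine the Main Theorem at $m=0$ (i.e.\ Propositions~\ref{[C,E]} and~\ref{[E,E]}) with the asymptotic dimension formula of Conjecture~\ref{Conjecture}, reducing the whole statement to the growth of $\dim S_w(\Gamma_F)$. First I would record that, by Lemma~\ref{Conjugate}, the Hilbert modular groups $\Gamma_F$ attached to distinct totally real fields of narrow class number one are pairwise non-conjugate in $\mathrm{GL}_2^+(\mathbb{R})^n$; enumerating all such fields as a sequence $\{F_m\}$ ordered by $n+D$ via Hermite's theorem (exactly as in the proof of Theorem~\ref{Conjecture-firstformula}), this sequence of groups satisfies the hypotheses of Conjecture~\ref{Conjecture}. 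Given any solution $f=gh$ as in the statement, write $w$ for the weight of $f=gh$, so that $w=k\mathbf{1}+l$ in the cuspidal case ($g=E_k$ with $k\geq4$ and $h\in S_l(\Gamma_F)$) and $w=(k+l)\mathbf{1}$ in the Eisenstein case ($g=E_k$, $h=E_l$, $k\neq l$). Since $[g,h]_0=gh$ is assumed to be an eigenform, Proposition~\ref{[C,E]} (respectively Proposition~\ref{[E,E]}) forces $\dim S_w(\Gamma_F)\leq1$.

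It therefore suffices to prove two things: that there are only finitely many pairs $(F,w)$ with $\dim S_w(\Gamma_F)\leq1$, and that each such pair gives rise to only finitely many triples $(f,g,h)$. The second is immediate: for a fixed $(F,w)$ there are only finitely many admissible decompositions of $w$ into a pair $(k,l)$ of an Eisenstein weight and an eigenform weight, and for each there are only finitely many weight-$l$ eigenforms $h$ (the Eisenstein space of a given weight is one-dimensional and $S_l(\Gamma_F)$ is finite dimensional); once $g=E_k$ and $h$ are fixed, $f=gh$ is determined. So the heart of the matter is the finiteness of the set of pairs.

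For the finiteness of pairs I would argue by contradiction. Suppose there are infinitely many distinct pairs $(F_{m_j},w_j)$ with $\dim S_{w_j}(\Gamma_{F_{m_j}})\leq1$. Since for any bound $B$ only finitely many pairs satisfy $|w|+m\leq B$ (there are finitely many indices $m\leq B$, and for each a fixed degree and hence finitely many $w$ with $|w|\leq B$), we may pass to a subsequence along which $|w_j|+m_j\to\infty$. Setting
$$V_j=(4\pi)^{-n}N(w_j-\mathbf{1})\,\mathrm{vol}(\Gamma_{F_{m_j}}\backslash\mathbb{H}^n),$$
the dimension part of Conjecture~\ref{Conjecture} gives $\dim S_{w_j}(\Gamma_{F_{m_j}})=(1+o(1))V_j$. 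Each component of $w_j$ is at least $4$, so every factor of $N(w_j-\mathbf{1})$ is at least $3$ and in particular $N(w_j-\mathbf{1})\geq1$, whence $V_j\geq(4\pi)^{-n}\mathrm{vol}(\Gamma_{F_{m_j}}\backslash\mathbb{H}^n)$. If $m_j\to\infty$ along a further subsequence, Theorem~\ref{Conjecture-firstformula} yields $V_j\to\infty$; otherwise $m_j$ is bounded, and passing to a subsequence on which $m_j$ is constant forces $|w_j|\to\infty$ with the degree $n$ fixed, so $N(w_j-\mathbf{1})\to\infty$ and again $V_j\to\infty$. In either case $\dim S_{w_j}(\Gamma_{F_{m_j}})\to\infty$, contradicting $\dim\leq1$. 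This proves the finiteness of pairs and hence the theorem.

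The hard part will be the bookkeeping of the joint limit in the last paragraph: Conjecture~\ref{Conjecture} is phrased as a single limit as $|k|+m\to\infty$, so I must be sure the normalizing quantity $V_j$ escapes to infinity along \emph{every} way that $|w|+m$ can grow. This is handled by separating the field-variation direction (controlled by Theorem~\ref{Conjecture-firstformula}, the volume part) from the weight-variation direction (controlled by the growth of $N(w-\mathbf{1})$), the two being glued together cleanly by the trivial lower bound $N(w-\mathbf{1})\geq1$ coming from $w\succeq4\cdot\mathbf{1}$.
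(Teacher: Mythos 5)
Your proposal is correct, but it takes a genuinely different route from the paper at the crucial finiteness step. The paper argues in two regimes: for $n$ large, the dimension part of Conjecture \ref{Conjecture} together with Theorem \ref{Conjecture-firstformula} forces $\dim S_{k+l}(\Gamma_m)>1$ for all admissible weights, so Propositions \ref{[C,E]} and \ref{[E,E]} leave no solutions at all; then, for each of the remaining finitely many degrees $n$, the paper simply cites the unconditional fixed-degree finiteness theorem of You and the first named author \cite{You-Zhang2021Hilbert}, which covers all fields of that degree and all weights at once. You never invoke \cite{You-Zhang2021Hilbert}: instead you extract from any solution the necessary condition $\dim S_w(\Gamma_F)\leq 1$ and prove directly, by a subsequence argument, that only finitely many pairs $(F,w)$ with $w\succeq 4\cdot\mathbf{1}$ can satisfy it, splitting the joint limit $|w|+m\to\infty$ into the field-growth direction (handled by the volume part, Theorem \ref{Conjecture-firstformula}, glued in via the trivial bound $N(w-\mathbf{1})\geq 1$) and the fixed-field weight-growth direction (handled by $N(w-\mathbf{1})\to\infty$ at fixed degree, which is valid here since every component of $w-\mathbf{1}$ is at least $3$). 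This is a legitimate reading of the conjecture's doubly-indexed limit, and it buys a stronger intermediate statement than the paper establishes (finiteness of the set of pairs $(F,w)$ with $\dim S_w(\Gamma_F)\leq 1$), at the cost of leaning on the conjectural dimension asymptotics even in the fixed-field regime, where the paper's route rests only on the known unconditional result. One point to shore up: your claim that a fixed pair $(F,w)$ yields only finitely many triples needs more than finite-dimensionality of $S_l(\Gamma_F)$ --- a simultaneous Hecke eigenspace of dimension at least $2$ would contain infinitely many normalized eigenforms, since the normalization $c(\mathcal{O},h)=1$ is a single linear condition --- so you should cite multiplicity one for full-level Hilbert cusp forms (strong multiplicity one for $\mathrm{GL}_2$ over number fields); this is standard, and it is likewise implicit in the paper's appeal to \cite{You-Zhang2021Hilbert}.
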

\begin{proof}
Suppose that $g$ and $h$ are of weight $k$ and $l$ respectively. Note that the sequence of Hilbert modular groups $\{\Gamma_m\}$ satisfies the assumption in Conjecture \ref{Conjecture} by Lemma \ref{Conjugate}. 
 
Note that if $n$ is big, so is $m$. So by Theorem \ref{Conjecture-firstformula}, $(4\pi)^{-n}\mathrm{vol}(\Gamma_{m}\backslash\mathbb{H}^n)$ tends to $\infty$ as $n$ approaches $\infty$. 
So $\text{dim }S_{k+l}(\Gamma_{m})>1$
for all $k+l\succeq2\cdot\mathbf{1}$ when $n$ is large enough, since the main term of  
$
\text{dim }S_{k+l}(\Gamma_{m})$ is   $(4\pi)^{-n}\mathrm{vol}(\Gamma_{m}\backslash\mathbb{H}^n)N(k+l-\mathbf{1})$ as $|k|+|l|+m$ grows by Conjecture \ref{Conjecture}. It follows that, when $n$ is large enough, the equation $f=gh$ has no solutions in the chosen triple $(f,g,h)$ by Proposition \ref{[C,E]} and Proposition \ref{[E,E]}. 

But You and the first named author \cite{You-Zhang2021Hilbert} proved that for any fixed $n$, there are only finitely many eigenform product identities over all totally real number fields of degree $n$ and all eigenforms of weight $2\cdot\mathbf{1}$ or greater. Hence  the equation $f=gh$ has only finitely many solutions in the chosen triple $(f,g,h)$ over all totally real number fields of narrrow class number one and all eigenforms of weight $2\cdot\mathbf{1}$ or greater, under the assumption in the statement. We complete the proof.
\end{proof}

\bibliographystyle{amsplain}
%\bibliography{paper}

\end{document}